\documentclass[10pt]{amsart}

\usepackage{amsmath,amsthm}
\usepackage[colorlinks=true,linkcolor=blue,urlcolor=blue,citecolor=blue]{hyperref}
\usepackage{amsrefs}
\usepackage[all]{xy}

\usepackage{amsrefs} 
\usepackage{enumerate} 
\usepackage{amssymb}  
\usepackage{latexsym} 
\usepackage{comment}
\usepackage{color}
\usepackage{colonequals}
 \usepackage{epsfig}
\usepackage{tikz}
\usepackage{tikz-cd}
\usepackage{dsfont}
\usepackage{caption}
\usepackage{array}
\usepackage{makecell}
\usepackage{booktabs}
\usepackage{float}

\def\doi#1{%
	\href{https://doi.org/#1}{doi:#1}}%

\DeclareMathOperator{\GL}{GL}

\DeclareMathOperator{\PSL}{PSL}

\DeclareMathOperator{\Sing}{Sing}

\begin{document}

\newtheorem{theorem}{Theorem}[section]
\newtheorem{lemma}[theorem]{Lemma}
\newtheorem{proposition}[theorem]{Proposition}
\newtheorem{corollary}[theorem]{Corollary}
\newtheorem*{theorem*}{Theorem}
\newtheorem*{mainthmA}{Main Theorem A}
\newtheorem*{mainthmB}{Main Theorem B}
\newtheorem*{maincorA}{Corollary A}

\theoremstyle{definition}
\newtheorem{remark}[theorem]{Remark}
\newtheorem{definition}[theorem]{Definition}
\newtheorem{example}[theorem]{Example}
\newtheorem{notation}[theorem]{Notation}
\newtheorem*{notation*}{Notation}

\newtheorem*{problem*}{Problem}
\newtheorem*{question*}{Question}
\newtheorem*{acknowledgements}{Acknowledgements}

\numberwithin{equation}{section}

\renewcommand{\subjclassname}{%
	\textup{2020} Mathematics Subject Classification}

\title[
]{A minimality criterion for surfaces of general type with applications to product-quotient surfaces}

\author{Federico Fallucca}
\address{Dipartimento di Matematica\\
	Universit`a degli Studi di Trento\\
	Via Sommarive 14, 38123 Trento, Italy}
\email{federico.fallucca@unitn.it}

\keywords{surfaces of general type, minimality of algebraic surfaces, exceptional curves of the first kind, product-quotient surfaces}
\subjclass[2020]{Primary 14J29; Secondary 14J17, 14E30, 14H45, 14H30}

\maketitle 

\begin{abstract}
		We provide a numerical criterion to find the curves on a surface of general type that are contracted to its minimal model. We then apply this approach to product-quotient surfaces, where the criterion determines whether all such contracted curves are contained in reducible fibers of the two natural fibrations.
\end{abstract}


\section{Introduction}
Finding smooth rational curves with self-intersection $-1$ on a complex algebraic surface of general type is a difficult problem. Studying their configurations relative to other negative curves is equally challenging, yet necessary to determine the sequence of blow-downs that yields the minimal model.

A classical approach to constructing the minimal model consists in finding an effective canonical $\mathbb{R}$-divisor $D$ on the surface $S$. Since $DE=K_S E = -1$ for every exceptional curve $E$, the support of $D$ contains all  $(-1)$-curves. Contracting an exceptional curve $E$ yields a new surface $S'$ and a new effective canonical $\mathbb{R}$-divisor $D'$. Iterating this process terminates in finitely many steps, producing the minimal model of $S$. 

The construction of effective canonical divisors has been studied by several authors, such as Hirzebruch, Van der Geer, Van de Ven, and Zagier \cite{HVdeVen79,VGVV97,HZ77,VanGeer79}, who were interested, among other things, in investigating the minimality of Hilbert modular surfaces, see \cite{VanGeer88}. Their main strategy consists in studying special configurations of curves on the surface and taking a positive linear combination $D$ of these curves satisfying $D^2 = D K_S = K_S^2 > 0$. Indeed, from Hodge Index Theorem,  $K_S$ and $D$ are numerically equivalent, see also \cite[Prop. 7.3]{VanGeer88}.  

In this paper, we present a numerical criterion to find $(-1)$-curves on a surface of general type and to construct its minimal model. Our criterion extends the classical approach by considering canonical $\mathbb{R}$-divisors $D = A - B$ that, instead of being effective, are expressed as the difference of two effective $\mathbb{R}$-divisors. For any exceptional $(-1)$-curve $E$ on $S$, we have
\[
A E = K_S E + B  E = B  E - 1.
\]
Consequently, if $B  E < 1$, then $E$ must be a component of the support of $A$. However, computing $B E$ assumes knowledge of the relative configuration of $E$ and the components of $B$, which is typically a difficult geometric problem.

To avoid this obstacle, we exploit the intersection constraints that a $(-1)$-curve and other curves must satisfy on a surface of general type, as established by Bombieri \cite{Bomb73}, see Proposition \ref{prop: diseg_intersezione}. Specifically, we prove that if a certain real value $\tau(B)$, depending solely on $B$ and not on $E$, is less than one, then $B E \leq \tau(B) < 1$. This implies that the support of $A$ contains all $(-1)$-curves $E$ on $S$. 
\\
Furthermore, blowing down a $(-1)$-curve $E$ occurring in $A$ yields a new surface $S'$ and a pair of effective $\mathbb{R}$-divisors $A'$ and $B' = \sum \lambda_i C_i'$. Since $\tau(B') \leq \tau(B) < 1$ (see Theorem \ref{thm: minimality_criterion}) then $A'$ contains all $(-1)$-curves of $S'$ by the same argument as above. Iterating this process produces the minimal model $S_{\min}$ in finitely many steps. 

Our first main result is the following, see Theorem \ref{thm: minimality_criterion}: 
\begin{mainthmA}
	Let $S$ be a surface of general type, and let $A$ and $B$ be effective $\mathbb{R}$-divisors on $S$ such that $K_S\equiv A-B$. 
	Write $B = \sum \lambda_i C_i$, where the $C_i$ are distinct irreducible curves and $\lambda_i \in \mathbb{R}_{\geq 0}$. If
	\begin{equation}\label{eq: minimality_criterion_intro}
		\tau(B) := \sum_{i} \lambda_i \max\{1, 2p_a(C_i) - C_i^2 - 3\} < 1,
	\end{equation}
	then the support of $A$ contains all curves contracted by the morphism onto the minimal model $ S \to S_{\min}$. In particular, it contains all $(-1)$-curves of $S$.
\end{mainthmA}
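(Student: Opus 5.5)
The plan is to show first that $BE \le \tau(B)$ for every $(-1)$-curve $E$, then propagate the hypothesis through the successive contractions onto $S_{\min}$ by induction on the number of blow-downs.

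\emph{Bound for a single $(-1)$-curve.} Fix a $(-1)$-curve $E$ on $S$. For each component $C_i$ of $B$ there are two cases. If $C_i = E$, then $\lambda_i E\cdot E = -\lambda_i \le \lambda_i$, so the bound holds trivially. If $C_i \neq E$, we invoke Bombieri's intersection inequality (Proposition \ref{prop: diseg_intersezione}). I expect it to give $C_i E \le \max\{1, 2p_a(C_i)-C_i^2-3\}$ for an irreducible curve $C_i \ne E$ on a surface of general type. The underlying idea is that if $C_iE \geq 2$, then contracting $E$ produces an image curve $C_i'$ with
\[
p_a(C_i') = p_a(C_i) + \binom{C_iE}{2}, \qquad C_i'^2 = C_i^2 + (C_iE)^2,
\]
and general type forces $K_{S'}C_i' \ge$ something. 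If the Proposition is stated in a slightly different form, we adapt it to this bound.

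Summing over the components with weights $\lambda_i \ge 0$ gives $BE \le \tau(B) < 1$. Therefore
\[
AE = K_SE + BE = -1 + BE < 0,
\]
and since $A$ is effective, $E$ must be a component of $A$.

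\emph{Induction over the contractions.} Let $\pi\colon S \to S'$ be the contraction of $E$. Set $A' = \pi_*A$ and $B' = \pi_*B$. Since $K_S = \pi^*K_{S'} + E$, we have
\[
\pi^*(A' - B') = A - B + cE
\]
for some real $c$. Hence $K_{S'} \equiv A' - B'$, since pushing forward kills $E$. Effectivity is preserved by push-forward, and the component $E$ disappears from $B$ if it occurred there.

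The key claim is $\tau(B') \le \tau(B)$. For $C_i \ne E$ with $m = C_iE$, we have
\[
2p_a(C_i') - C_i'^2 - 3 = 2p_a(C_i) - C_i^2 - 3 + m(m-1) - m^2 = 2p_a(C_i) - C_i^2 - 3 - m,
\]
which is $\le$ the original quantity. The max with $1$ is therefore monotone, and dropping $E$ only decreases $\tau$.

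So the hypothesis persists on $S'$, and every $(-1)$-curve of $S'$ lies in the support of $A'$. The curves contracted by $S\to S_{\min}$ are exactly those contracted at successive steps, i.e.\ strict transforms of $(-1)$-curves on intermediate surfaces. By induction, each such curve lies in the support of the pushforward of $A$ at that stage, and hence its strict transform lies in the support of $A$. Finiteness follows from the drop in Picard number at each step.

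\emph{Main obstacle.} The crux is the per-component bound $C_iE \le \max\{1, 2p_a(C_i)-C_i^2-3\}$ from Bombieri's result. In particular, when $C_i$ is itself a negative rational curve, we must verify that the quantity $2p_a - C^2 - 3 = -C^2-3$ correctly captures the situation; for a $(-2)$-curve it equals $-1$, so the max gives $1$. I would check this case by direct contraction arguments if the cited Proposition does not cover it verbatim.
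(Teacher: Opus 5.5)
Your proposal is correct and follows the paper's proof essentially step by step: you bound $BE\le\tau(B)$ componentwise via Proposition \ref{prop: diseg_intersezione} to get $AE=-1+BE<0$. You then contract $E$ and observe that $2p_a(C_i')-C_i'^2-3$ drops by $C_iE\ge 0$, which the paper phrases as $K_{S'}C_i'-1=K_SC_i-C_iE-1$, so $\tau(B')\le\tau(B)$ and the argument iterates. The worry in your last paragraph is unnecessary, since Proposition \ref{prop: diseg_intersezione} is stated for every irreducible curve $C$, including negative rational curves and, trivially, $C=E$.
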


We then adapt this criterion to product-quotient surfaces of general type. A product-quotient surface is the minimal resolution of singularities $\rho \colon S \to X$ of the quotient $X := (C_1 \times C_2)/G$ of a product of two curves by the diagonal action of a finite group $G$, see \cite{Cat00, BP12, BP16}. In this setting, the choice of the effective $\mathbb{R}$-divisors $A$ and $B$ becomes clear, see Proposition \ref{prop: K_S_as_A-B}.

This setting leads to Theorem \ref{thm: min_Function_localization_-1_curves}, our second main result, which we present here in a shortened version. Let $i_j$ denote the number of points $\overline{z} \in C_j/G$ such that the fiber over $\overline{z}$ of the fibration $X \to C_j/G$ contains a singularity of $X$, for $j=1,2$. Let $\Delta_1 \subseteq \mathbb{R}^{i_1}$ and $\Delta_2 \subseteq \mathbb{R}^{i_2}$ be the standard simplices.
\begin{mainthmB}
	Let $\rho \colon S \to X = (C_1 \times C_2)/G$ be a product-quotient surface of general type, and let $F \colon \Delta_1 \times \Delta_2 \to \mathbb{R}_{\geq 0}$ be the continuous piecewise linear function associated with $S$ (see Theorem \ref{thm: min_Function_localization_-1_curves}). If 
	\[
	F(\overline{\lambda}, \overline{\mu}) < 1
	\]
	for some $(\overline{\lambda}, \overline{\mu}) \in \Delta_1 \times \Delta_2$, then all curves contracted by $S\to S_{\min}$ are contained in reducible fibres of the two natural fibrations $f_1 \colon S \to C_1/G$ and $f_2 \colon S \to C_2/G$.
\end{mainthmB}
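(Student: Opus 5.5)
The plan is to deduce Main Theorem B from Main Theorem A. For every point $(\overline{\lambda},\overline{\mu})\in\Delta_1\times\Delta_2$ I will build a decomposition $K_S\equiv A_{\overline{\lambda},\overline{\mu}}-B_{\overline{\lambda},\overline{\mu}}$ into effective $\mathbb{R}$-divisors, taking $F(\overline{\lambda},\overline{\mu}) := \tau(B_{\overline{\lambda},\overline{\mu}})$. Two things then remain: that the support of $A$ lies in reducible fibres of $f_1$ and $f_2$, and that this $F$ is the continuous piecewise linear function of the statement.

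\textbf{Step 1: the decomposition (Proposition \ref{prop: K_S_as_A-B}).} The singularities of $X$ are cyclic quotient singularities. For the resolution $\rho$ I will write $K_S=\rho^*K_X-\sum_E a_E E$, where $E$ runs over the Hirzebruch--Jung curves and $0\le a_E<1$ are the discrepancy coefficients. These curves have $p_a(E)=0$ and $E^2\le -2$, so each contributes $\max\{1,-E^2-3\}$ to $\tau$.

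For $K_X$, the ramification formula for $C_1\times C_2\to X$ gives $K_X\equiv \alpha_1\Phi_1+\alpha_2\Phi_2$ as $\mathbb{Q}$-divisors. Here $\Phi_j$ is the class of a fibre of $X\to C_j/G$, and $\alpha_j=2g(C_j/G)-2+\sum(1-1/m)$ is a sum over the branch points of $C_j\to C_j/G$. Since $S$ is of general type, both curves have genus at least $2$, so $\alpha_1,\alpha_2>0$.

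Let $\overline{z}_1,\dots,\overline{z}_{i_1}$ be the points of $C_1/G$ whose fibres contain singular points. The fibres of $X\to C_1/G$ are all numerically equivalent, so for any $\overline{\lambda}\in\Delta_1$ I can write $\Phi_1\equiv\sum_k\lambda_k\Phi_1(\overline{z}_k)$, and similarly for $\Phi_2$ with $\overline{\mu}$. The pullback $\rho^*\Phi_1(\overline{z}_k)$ is the fibre $f_1^*(\overline{z}_k)$ of $S$. This is an effective divisor: the strict transform of the reduced fibre, with multiplicity $m_k$, plus exceptional curves with explicit positive rational coefficients computed from the continued fraction data.

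Substituting, I get
\[
K_S\equiv \alpha_1\sum_k\lambda_k f_1^*(\overline{z}_k)+\alpha_2\sum_l\mu_l f_2^*(\overline{w}_l)-\sum_E a_E E .
\]
For each exceptional curve I collect its total coefficient $c_E(\overline{\lambda},\overline{\mu})$. This coefficient is affine linear in $(\overline{\lambda},\overline{\mu})$: it equals the positive fibre contributions minus $a_E$. I put $E$ into $A$ with coefficient $c_E$ if $c_E\ge 0$, and into $B$ with coefficient $-c_E$ otherwise. The strict transforms of the fibres always go into $A$.

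Then
\[
F(\overline{\lambda},\overline{\mu})=\tau(B)=\sum_E \max\{0,-c_E(\overline{\lambda},\overline{\mu})\}\max\{1,-E^2-3\},
\]
which is continuous and piecewise linear.

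\textbf{Step 2: apply Main Theorem A.} If $F(\overline{\lambda},\overline{\mu})<1$, Main Theorem A says every curve contracted by $S\to S_{\min}$ lies in the support of $A$. By construction that support is contained in $\bigcup_k f_1^{-1}(\overline{z}_k)\cup\bigcup_l f_2^{-1}(\overline{w}_l)$. Each of these fibres contains a singular point of $X$, so in $S$ it contains at least one exceptional curve together with the strict transform of the original fibre. Hence it is reducible, which gives the conclusion.

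\textbf{Main obstacle.} The delicate point is Step 1: computing the coefficients of the exceptional curves in $\rho^*\Phi_j(\overline{z})$ explicitly enough that the bookkeeping gives an affine function. This is the local computation on a Hirzebruch--Jung string, where the fibre near a singularity of type $\frac1n(1,q)$ pulls back to the strict transform plus exceptional curves with coefficients determined by $n,q$. The coefficients of one string receive contributions from both fibrations, the $f_1$-fibre and the $f_2$-fibre through the singular point, weighted by $\alpha_1\lambda_k$ and $\alpha_2\mu_l$ respectively. I would first verify this for a single singularity using the standard formulas for the pullback of the two coordinate axes, and then sum over singular points. I also need to check that an exceptional curve placed in $A$ causes no trouble: it still lies in a fibre of both fibrations, hence in a reducible fibre.
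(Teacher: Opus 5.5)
Your proposal is correct and follows the paper's own proof. Step 1 reconstructs Proposition \ref{prop: K_S_as_A-B}: your $\alpha_j$ are the paper's $\Theta_j$, and your $c_E$ are exactly the affine functions $\xi_{i,p}$ obtained from Proposition \ref{prop: formula_discrepancies} and Theorem \ref{thm: dec_Fibres_self_int_and_genus}. Step 2 matches the paper's argument: $F=\tau(B)$ because $p_a(E_i^p)=0$ and $(E_i^p)^2=-b_i^p$, then Theorem \ref{thm: minimality_criterion} applies, and the support of $A$ consists of central components and exceptional curves, all of which lie in reducible fibres.
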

We note that the function $F$ is a purely combinatorial tool. It is independent of the geometry of the product-quotient surface and depends solely on its \textit{combinatorial structure}, namely:
\begin{itemize}
	\item the signatures of the quotient maps $\pi_j \colon C_j \to C_j/G$;
	\item the basket of singularities of $X$; and
	\item how these singularities are distributed among the fibers of the natural fibrations $X \to C_1/G$ and $X \to C_2/G$.
\end{itemize}
Example \ref{exmp: example} illustrates how the main theorem and the construction of $F$ work.

Main Theorem B immediately implies Proposition \ref{prop: loc_-1_curves_from_signatures} and Corollary \ref{cor: improvement_Lemma_Roberto}, which represents a major improvement of \cite[Prop. 4.7(2)]{BP12}. When all non-canonical singularities of \(X\) are concentrated over a single point \(\overline{x} \in C_1/G\) and a single point \(\overline{y} \in C_2/G\), these statements allow us to find the curves contracted to the minimal model of \(S\) (if any) directly from the signatures of \(\pi_j \colon C_j \to C_j/G\). Hence, there is no need to construct the function \(F\), which serves only as an intermediate tool.

As an application of Theorem \ref{thm: min_Function_localization_-1_curves}, we discuss the minimality of the remaining cases in \cite[Table 21]{fedeLincei} for regular product-quotient surfaces with $\chi(\mathcal{O}_S)=4$. This yields Theorem \ref{thm: improvement_classif_pg3_q0}:

\begin{theorem*}
	Let $S$ be a regular product-quotient surface with $23 \leq K_S^2 \leq 32$ and $\chi(\mathcal{O}_S)=4$. Then $S$ is a surface of general type belonging to one of the families described in \cite[Tables 9--21]{fedeLincei}. Furthermore, except for the families $no.537$, these surfaces are minimal.
\end{theorem*}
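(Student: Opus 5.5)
The statement is essentially an application of Main Theorem B (Theorem \ref{thm: min_Function_localization_-1_curves}) to the finitely many families remaining in \cite[Table 21]{fedeLincei}. The first part, that $S$ is of general type and belongs to one of the listed families, is taken from the classification in \cite{fedeLincei}. There, for $\chi(\mathcal{O}_S)=4$ and $q=0$, the product-quotient surfaces with $K_S^2$ in this range were enumerated by their combinatorial data. Since $K_S^2>0$ and $\chi(\mathcal{O}_S)=4>0$, the standard argument for product-quotient surfaces rules out the non-general-type cases: such surfaces are never ruled or rational when $q=0$ and $p_g=3$, and $K_S^2>0$ excludes Kodaira dimension $\le 1$ once minimality up to blow-downs is controlled. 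So I would simply cite the classification and focus on minimality.

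\textbf{Step 1: families already known to be minimal.} In \cite{fedeLincei} many families are shown to be minimal, e.g.\ when $X$ has only canonical singularities, or via \cite[Prop. 4.7]{BP12}. I would list the undecided families and, for each, record the signatures of $\pi_1,\pi_2$, the basket of singularities, and the distribution of singular points among the fibres of $X\to C_j/G$.

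\textbf{Step 2: locate the contracted curves.} For each undecided family, I would construct the piecewise linear function $F$ on $\Delta_1\times\Delta_2$ and exhibit an explicit point $(\overline\lambda,\overline\mu)$ with $F<1$. A natural choice is a vertex, or a barycentric point weighting the fibres carrying the worst singularities. Where all non-canonical singularities lie over a single point of each $C_j/G$, I would instead invoke Proposition \ref{prop: loc_-1_curves_from_signatures} or Corollary \ref{cor: improvement_Lemma_Roberto}, reading the answer directly off the signatures. Main Theorem B then confines every curve contracted by $S\to S_{\min}$ to reducible fibres of $f_1$ or $f_2$.

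\textbf{Step 3: exclude $(-1)$-curves in reducible fibres.} A reducible fibre of $f_j$ consists of the strict transform of $(C_{3-j}/H)$ (the quotient of the fibre curve by a stabiliser $H$) together with the Hirzebruch--Jung strings over the cyclic quotient singularities lying on that fibre. The strings consist of curves with self-intersection $\le -2$. So a $(-1)$-curve must be the strict transform of the central component, which is then rational. Its self-intersection can be computed from the fibre being numerically equivalent to a multiple of a general fibre and the continued fraction data. For each family, I would check that no central component is a rational curve of self-intersection $-1$. Since the strings cannot contain $(-1)$-curves, and contractions start with a $(-1)$-curve, this gives minimality. For family no.\,537 the check should instead produce a $(-1)$-curve, or the function $F$ should fail to be $<1$, and that case is excluded from the claim.

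The main obstacle is Step 2. One must actually find a point with $F<1$ for every remaining family. This is a finite computation, but the optimisation over the product of simplices may be delicate when singularities are spread over several fibres. I would first try vertices and then solve the small linear programs on each linear piece. Step 3 is routine bookkeeping with the self-intersection formula for fibre components.
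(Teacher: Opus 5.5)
Your plan has a genuine gap at Step~2, for family no.~544. The problem is not that the optimisation is delicate: there $F$ cannot be made $<1$ at all. In that family ($G=\langle 768,1086051\rangle$, basket $\frac16,\frac12^2,\frac56$, $t_1=(2,4,6)$, $t_2=(2,6,8)$), all singular points lie over the two points of ramification index $6$, so $F$ is a constant. The only non-canonical point is $\frac16(1,1)$, whose string is a single $(-6)$-curve $E_1$. With $\delta_{\overline x}=\frac1{12}\cdot 6=\frac12$ and $\delta_{\overline y}=\frac5{24}\cdot 6=\frac54$ one gets $\xi_{1,p}=\frac16\bigl(\frac12+1\bigr)+\frac16\bigl(\frac54+1\bigr)-1=-\frac38$. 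Hence $B=\frac38E_1$ and $F=\frac38\cdot\max\{1,6-3\}=\frac98>1$. Your fallback does not rescue it either: case (4) of Proposition~\ref{prop: loc_-1_curves_from_signatures} needs $\frac12+\frac14+\frac12+\frac18\le 1$, which fails. So Main Theorem~B says nothing about no.~544, and your Step~3 never starts there.

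The missing idea is to use the decomposition $K_S\equiv A-B$ more sharply than through the worst-case bound $\tau(B)$, exploiting integrality of intersection numbers. Let $E$ be a $(-1)$-curve. Proposition~\ref{prop: diseg_intersezione} gives $E_1E\le 3$.
\begin{itemize}
\item If $E_1E\le 2$, then $AE=-1+\frac38E_1E\le-\frac14<0$, so $E$ lies in the support of $A$. This is impossible: both central components have $Y^2=-2$ and all string curves have self-intersection $\le -2$.
\item If $E_1E=3$, then $AE=\frac18>0$. Since $E$ is not a component of $A$, it meets some component of $A$ with a positive integer intersection number. Hence $AE$ is at least the smallest coefficient of $A$, namely $\frac12$ (that of $Y_1$), a contradiction.
\end{itemize}
This is how the paper closes no.~544. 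For the remaining families your Steps~2--3 agree with the paper, including the use of Corollary~\ref{cor: improvement_Lemma_Roberto} for nos.~538 and~553, whose generating vectors are not known, so that $F$ cannot be computed by the script there. One side remark: for no.~537 you hedge that $F$ might fail to be $<1$. In fact $F=0$ there and the central component $Y_2$ is a $(0,-1)$-curve, though this does not affect the statement.
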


The surfaces in the families $no.537$ in \cite[Table 21]{fedeLincei} are not minimal; their minimal models are studied in Theorem \ref{thm: MainThm_note_2}.

\vspace{0.2cm}
The \texttt{MAGMA} \cite{BCP97} script and the computational data required to discuss the minimality of the cases in \cite[Table 21]{fedeLincei} are available in the public repository:
\begin{center}
	\href{https://github.com}{\texttt{MAGMA} script and results repository}
\end{center}
The repository includes the calculations listed in Table \ref{tab: minimality_results}. The \texttt{MAGMA} script contains three main functions: the first extracts the combinatorial structure of $S$ from its defining generating vectors; the second constructs the function $F$ and returns $F$ together with its minimum on $\Delta_1 \times \Delta_2$; the third returns the genus and self-intersection of the central components of $S$ from its combinatorial structure, according to theorems \ref{thm:Serrano} and \ref{thm: dec_Fibres_self_int_and_genus}.
\vspace{0.2cm}

The paper is organized as follows. In Section \ref{sec: discrepancies}, we provide some preliminaries on the discrepancies of cyclic quotient singularities. We prove Lemma \ref{lem: sol_tridiagonal_system}, where we give a closed formula for the inverse of the intersection matrix of a Hirzebruch-Jung string of a cyclic quotient singularity. This result is of independent interest and should already be known, although we have been unable to locate it in the existing literature. In Section \ref{sec: criterion_minimality_surf_gen_type}, we state and prove the first main Theorem \ref{thm: minimality_criterion}. Section \ref{sec: fibre_structure} is devoted to the second main Theorem \ref{thm: min_Function_localization_-1_curves} and to the study of the fibers of the natural fibrations of a product-quotient surface. In Section \ref{sec: minimality_PQ_pg3_q0}, we discuss the minimality of the remaining surfaces in \cite[Table 21]{fedeLincei}. Finally, in Section \ref{sec: minimality_PQ_pg0_q0} we discuss the minimality of product-quotient surfaces of general type with $p_g=q=0$ and $K^2 \geq -1$.
\begin{notation*}
	We denote by $\equiv$ the numerical equivalence relation between divisors. 
	
	Given a collection of positive integers $b_1,\dots, b_l\geq 2$, we denote by $[b_1, \dots, b_l]$ the continued fraction 
	\[
	  [b_1, \dots, b_l]:=b_1-\frac{1}{b_2-\frac{1}{
	  		b_3-\dots}}.
	\]
	Thus, $[b_1, \dots, b_l]$ is a fraction $\frac{n}{a}$, for some $1\leq a < n$ with  $\gcd(a,n)=1$. We denote by $D[b_1, \dots, b_l]:=a$ and by $N[b_1, \dots, b_l]:=n$ the denominator and numerator of the continued fraction $[b_1, \dots, b_l]$, respectively. 
	
\end{notation*}
\section{The discrepancies of a cyclic quotient singularity}\label{sec: discrepancies}
Let $X$ be a $\mathbb Q$-Gorenstein complex variety of dimension two. We denote by $K_X$ the class of the canonical Weil divisor on $X$, namely a Weil divisor such that $\mathcal O_{X}(K_X)\cong i_*(\Omega^n_{X^\circ})$, where $i\colon X^\circ \to X$ is the inclusion of the smooth locus of $X$. 

Note that a multiple of $K_X$ is a Cartier divisor. We recall the following 
\begin{definition}
	The Index of a point $p\in X$ is the minimal positive integer $I_p$ such that $I_pK_X$ is Cartier in a neighbourhood of $p$. 
	
	The Index $I$ is the minimal positive integer such that $IK_X$ is Cartier. In particular, $I=\text{lcm}\{I_p \colon p \in \Sing(X)\}$.  It is well-known \cite[Thm. 4-6-20]{Mat2002} that the index of a cyclic quotient singularity $p$ of type $\frac{1}{n}(1,a)$ is 
	\[
	  I_p=\frac{n}{\gcd(n,a+1)}. 
	\]
\end{definition}
Let $\rho\colon S\to X$ be the minimal resolution of singularities of $X$, and let $\{E_i\}$ be the family of all exceptional prime divisors of $\rho$. Then 
\begin{equation}\label{eq: discrepancies}
	IK_S=\rho^*(IK_X)+\sum_i \alpha_i E_i,  \qquad \makebox{with} \quad \alpha_i\in \mathbb Z. 
\end{equation}
Given a singular point $p\in X$ and $\{E_i\}$ be the exceptional prime divisors of the resolution of $p$, the coefficients $a_i:=\alpha_i/I$ of $E_i$ appearing in \eqref{eq: discrepancies} are called the \textit{discrepancies} of $p$. From the well-known Negativity Lemma in dimension two \cite[Lemma 1-8-10]{Mat2002}, all the discrepancies are non-positive, namely $a_i\leq 0$.  The point  $p$ is called a \textit{canonical} singularity when all the coefficients $a_i$ of $E_i$ are zero.   

In this work, we only focus on surfaces with at most isolated cyclic quotient  singularities. 
The minimal resolution of a cyclic quotient singularity $p\in X$ of type $\frac{1}{n}(1,a)$ consists of replacing $p$ by a Hirzebruch-Jung string $E_1\cup \dots \cup E_\ell$, where $E_i$ are smooth rational curves with $E_iE_{i+1}=1$, for $i=1,\dots, \ell-1$, and $E_iE_j=0$ otherwise. Their self-intersection is $E_i^2=-b_i$, where $b_i$ are the coefficients of the continued fraction 
\[\frac{n}{a}=b_1-\frac{1}{b_2-\frac{1}{
		b_3-\dots}}=[b_1,\dots, b_\ell].\]
	\begin{definition}
		Given a tuple of positive integers $(b_1,\dots, b_\ell)$ with $b_i\geq 2 $, the tridiagonal matrix
		\[
		A(b_1, \dots, b_\ell):=\begin{pmatrix}
			-b_1 & 1 & 0 & \dots & 0 \\
			1 & -b_{2} & 1 & \dots & 0 \\
			0 & 1 & -b_{3} & \ddots & \vdots \\
			\vdots & \vdots & \ddots & \ddots & 1 \\
			0 & 0 & \dots &1  & -b_\ell
		\end{pmatrix}
		\]
		 is called Hirzebruch-Jung matrix of weights $(b_1,\dots, b_\ell)$. Clearly, the intersection pair matrix of a Hirzebruch-Jung string $E_1\cup\dots \cup E_\ell$ of type $\frac{1}{n}(1,a)$, with $\frac{n}{a}=[b_1,\dots,b_\ell]$, is the Hirzebruch-Jung matrix of weights $(b_1,\dots, b_\ell)$. 
	\end{definition} 
\begin{remark}\label{rem: det_HJ_matrix}
		The determinant of a Hirzebruch-Jung matrix $A(b_1,\dots, b_\ell)$ is equal to $(-1)^\ell N[b_1,\dots, b_\ell]$, see \cite[Lem. 4.2.9]{Fra12}. 
\end{remark}
		 We provide a closed formula for the inverse of a Hirzebruch-Jung matrix. 
	\begin{lemma}\label{lem: sol_tridiagonal_system}
		The inverse of a Hirzebruch-Jung matrix $A(b_1, \dots, b_\ell)$ is 
		\[
		A^{-1}(b_1,\dots, b_\ell)=
		\left(-\frac{D[b_{\min(i,j)},\dots, b_{1}]\cdot D[b_{\max(i,j)},\dots, b_\ell]}{N[b_1,\dots, b_\ell]}\right)_{1\leq i,j\leq \ell}.
		\]
	\end{lemma}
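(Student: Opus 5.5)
Since $A(b_1,\dots,b_\ell)$ is invertible (its determinant is $(-1)^\ell N[b_1,\dots,b_\ell]\neq 0$ by Remark \ref{rem: det_HJ_matrix}), we only have to check that the proposed matrix $M=(m_{ij})$ satisfies $A\cdot M=\mathrm{Id}$. We will do this column by column, using two sequences attached to the weights. Put $N=N[b_1,\dots,b_\ell]$ and define
\[
p_0:=0,\qquad p_i:=D[b_i,b_{i-1},\dots,b_1]\quad (1\le i\le \ell),
\]
\[
q_{\ell+1}:=0,\qquad q_j:=D[b_j,b_{j+1},\dots,b_\ell]\quad (1\le j\le \ell).
\]
For the continuant convention it is convenient to set $D[b_1]=1$, which matches $[b_1]=b_1/1$. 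Then $m_{ij}=-p_{\min(i,j)}\,q_{\max(i,j)}/N$.

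The first step is to recall how numerators and denominators of continued fractions are built. Write $N_k=N[b_1,\dots,b_k]$ (with $N_0=1$, $N_{-1}=0$). Since $[b_1,\dots,b_k]=b_1-1/[b_2,\dots,b_k]$, we get
\[
N[b_1,\dots,b_k]=b_1N[b_2,\dots,b_k]-N[b_3,\dots,b_k],\qquad D[b_1,\dots,b_k]=N[b_2,\dots,b_k].
\]
This uses $\gcd=1$, which holds because these are continuants; equivalently, compute the determinant by Laplace expansion along the first row and use Remark \ref{rem: det_HJ_matrix}. Applied to reversed strings, this gives $p_i=N[b_{i-1},\dots,b_1]$, which is the determinant (up to sign) of the leading $(i-1)\times(i-1)$ block. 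Similarly $q_j=N[b_{j+1},\dots,b_\ell]$ is the trailing block determinant, with the convention $N[\emptyset]=1$. From these we get the three-term recurrences
\[
p_{i+1}=b_ip_i-p_{i-1},\qquad q_{j-1}=b_jq_j-q_{j+1},
\]
with $p_1=1$ and $q_\ell=1$. For the diagonal entry we also need the Wronskian-type identity
\[
p_{i+1}q_i-p_iq_{i+1}=N\quad\text{for every } i.
\]
It holds at $i=\ell$, where the left side is $p_{\ell+1}\cdot 1-0=N[b_\ell,\dots,b_1]=N$; here reversing a continued fraction leaves the numerator unchanged, since transposing and reversing the matrix preserves the determinant. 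It is then constant in $i$ by the two recurrences, via the standard telescoping computation.

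Now fix a column $j$ and compute $(AM)_{ij}=m_{i-1,j}-b_im_{ij}+m_{i+1,j}$, with $m_{0,j}=m_{\ell+1,j}=0$. For $i<j$ we have $m_{kj}=-p_kq_j/N$ for $k\le j$, so the entry equals $-\frac{q_j}{N}(p_{i-1}-b_ip_i+p_{i+1})=0$; at $i=1$ this uses $p_0=0$. For $i>j$ the same argument with the $q$-recurrence gives $0$, using $q_{\ell+1}=0$ at the boundary. For $i=j$ the entry is
\[
-\frac{1}{N}\left(p_{j-1}q_j-b_jp_jq_j+p_jq_{j+1}\right)=-\frac{1}{N}\left(-p_{j+1}q_j+p_jq_{j+1}\right)=1
\]
by the Wronskian identity. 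This proves $AM=\mathrm{Id}$.

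The only delicate point is bookkeeping. We must identify $D[b_i,\dots,b_1]$ with $N[b_{i-1},\dots,b_1]$, including the edge cases ($D$ of a single entry is $1$, and the empty string has value $1$). We also need the invariance $N[b_1,\dots,b_\ell]=N[b_\ell,\dots,b_1]$, which follows from Remark \ref{rem: det_HJ_matrix} because the Hirzebruch-Jung matrix of the reversed string is conjugate to the original by the permutation matrix reversing the indices. Once these are in place, the rest is the routine verification above.
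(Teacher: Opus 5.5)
Your proof is correct, but it takes a different route from the paper.

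\textbf{The paper's route.} The paper uses the adjugate formula $(A^{-1})_{ij}=(-1)^{i+j}\det(A_{ij})/\det(A)$ and reads off each minor from its shape. The minor $A_{ii}$ is block diagonal with blocks $A(b_1,\dots,b_{i-1})$ and $A(b_{i+1},\dots,b_\ell)$. For $i<j$, the minor $A_{ij}$ is block upper triangular with a unitriangular middle block, so $\det(A_{ij})=\pm N[b_1,\dots,b_{i-1}]\,N[b_{j+1},\dots,b_\ell]$. The paper then turns these numerators into $D[b_i,\dots,b_1]$ and $D[b_j,\dots,b_\ell]$, using the same continued-fraction recurrence you use together with the reversal symmetry of $N$.

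\textbf{Your route.} You never compute a minor. You check $AM=\mathrm{Id}$ directly from the three-term recurrences for $p_i$ and $q_j$ and the constancy of the discrete Wronskian $p_{i+1}q_i-p_iq_{i+1}$. This is the standard Green's-function construction of the inverse of a Jacobi matrix.

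\textbf{What each buys.} The paper's argument is shorter once the block structure of $A_{ij}$ is seen, but it leaves the sign count $(-1)^{i+j}(-1)^{\ell-(j-i)-1}/(-1)^{\ell}=-1$ to the reader. Yours needs no minors and no signs. It also explains why the inverse has the product shape $p_{\min(i,j)}q_{\max(i,j)}$, and it adapts directly to general symmetric tridiagonal matrices.

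\textbf{Two bookkeeping points.} First, define the extension $p_{\ell+1}:=b_\ell p_\ell-p_{\ell-1}=N[b_\ell,\dots,b_1]$ explicitly. You use it both in the Wronskian at $i=\ell$ and in the diagonal entry for $j=\ell$, but your definitions only cover $0\le i\le \ell$. Second, you can anchor the Wronskian at $i=0$ instead of $i=\ell$. Set $q_0:=N[b_1,\dots,b_\ell]$, which satisfies $q_0=b_1q_1-q_2$. Then $p_1q_0-p_0q_1=q_0=N$ immediately. With this anchor you no longer need the reversal invariance $N[b_1,\dots,b_\ell]=N[b_\ell,\dots,b_1]$; it instead falls out as a corollary by evaluating the constant Wronskian at $i=\ell$.
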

	\begin{proof} We denote for short $A:=A(b_1,\dots, b_\ell)$. 
		
		The matrix $A$ is symmetric, so the $(i,j)$ and $(j,i)$ entries of the inverse of $A$ are equal to $(-1)^{i+j}\frac{\det(A_{ij})}{\det(A)}$, where $A_{ij}$ is the matrix obtained by removing the $i$-th row and $j$-th column from $A$, and $\det(A)=(-1)^lN[b_1,\dots, b_\ell]$ from Remark \ref{rem: det_HJ_matrix}. Thus, it is sufficient to determine $\det(A_{ij})$ for any $i\leq j$. Assume that $i=j$; then $A_{ii}$ consists of two blocks $A(b_1,\dots, b_{i-1})$ and $A(b_{i+1}, \dots, b_\ell)$, so that
		\begin{multline*}
			\det(A_{ii})=\det(A(b_1,\dots, b_{i-1}))\cdot \det(A(b_{i+1}, \dots, b_\ell))= \\ =(-1)^{\ell-1}N[b_1,\dots, b_{i-1}]\cdot N[b_{i+1},\dots, b_\ell].
		\end{multline*}
		Instead, assume that $i<j$; then $A_{ij}$ has the following structure 
		\[
		{\small A_{ij} = \left(
		\begin{array}{c|ccccc|c}
			A(b_1,\dots, b_{i-1}) & \mathbf{*} & \mathbf{*} & \mathbf{*}& \mathbf{*}& \mathbf{*} & \mathbf{0}\\ \hline 
			\mathbf{0} & 1 & -b_{i+1} & 1 & \dots & 0 & \mathbf{*}\\
			\mathbf{0} & 0 & 1 & -b_{i+2} & \ddots & \vdots & \mathbf{*} \\
			\vdots & \vdots & \ddots & \ddots & \ddots & 1 & \mathbf{*}\\
			\mathbf{0} & 0 & \dots & 0 & 1 & -b_{j-1} & \mathbf{*}\\
			\mathbf{0} & 0 & \dots & 0 & 0 & 1 &\mathbf{*} \\ \hline
			\mathbf{0} & & & \mathbf{0} & & & A(b_{j+1},\dots, b_\ell)
		\end{array}
		\right)}
		\]
		so
		\[
		\det(A_{ij})=(-1)^{l-(j-i)-1}N[b_1,\dots, b_{i-1}]\cdot N[b_{j+1}, \dots, b_\ell].
		\]	Finally, the thesis follows once one observes
		\[
		[b_{j},\dots, b_l]=b_{j}-\frac{1}{[b_{j+1},\dots, b_\ell]}
		=\frac{b_{j}N[b_{j+1},\dots, b_\ell]-D[b_{j+1},\dots, b_\ell]}{N[b_{j+1},\dots, b_\ell]} 
		\]
		so that $N[b_{j+1}, \dots, b_\ell]=D[b_{j}, \dots, b_\ell]$ and $N[b_1,\dots, b_{i-1}]=N[b_{i-1},\dots, b_1]=D[b_i,b_{i-1},\dots, b_1]$.
	\end{proof}
The discrepancies $a_i$ of $E_i$ of a cyclic quotient singularity 
can be computed using the 
partial continued fractions of $[b_1,\dots, b_\ell]$. This result is mentioned  in \cite[page 286]{HTU17}. We present here a short proof using Lemma \ref{lem: sol_tridiagonal_system}.
	\begin{proposition}
		\label{prop: formula_discrepancies}
		Let  $p\in X$ be a cyclic quotient singularity of type $\frac{1}{n}(1,a)$, with $\frac{n}{a}=[b_1,\dots, b_\ell]$, and let $E_1\cup \dots \cup E_\ell$ be its Hirzebruch-Jung string. 
		Then in a neighbourhood of the singular point of $p$ we have $K_S=\rho^*(K_X)+\sum_{i=1}^\ell a_iE_i$, where 
		\begin{equation*}
			a_i=-\left(1-\frac{D[b_i, \dots, b_\ell]+ D[b_i,b_{i-1},\dots, b_1]}{n}\right), \qquad i=1,\dots, \ell. 
		\end{equation*}
	\end{proposition}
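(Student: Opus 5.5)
The discrepancies are determined by adjunction. Since $\rho^*(K_X)\cdot E_j=0$ for every exceptional curve and each $E_j$ is a smooth rational curve with $E_j^2=-b_j$, we have
\[
K_S\cdot E_j = -2-E_j^2 = b_j-2 .
\]
Intersecting $K_S=\rho^*(K_X)+\sum_i a_iE_i$ with each $E_j$ therefore gives the linear system
\[
\sum_{i} a_i E_iE_j = b_j-2, \qquad j=1,\dots,\ell,
\]
that is, $A\,\mathbf a=\mathbf v$ with $A=A(b_1,\dots,b_\ell)$ and $v_j=b_j-2$. The matrix $A$ is negative definite, hence invertible, so $\mathbf a=A^{-1}\mathbf v$. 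I plan to evaluate this directly using the closed formula of Lemma \ref{lem: sol_tridiagonal_system}.

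A cleaner way is to write $v_j=(b_j-2)=-(A\mathbf 1)_j-\delta_{j1}-\delta_{j\ell}$. This holds because the $j$-th row sum of $A$ is $-b_j+2$ for interior rows and $-b_j+1$ for the two end rows. Hence
\[
\mathbf a=-\mathbf 1 - A^{-1}(\mathbf e_1+\mathbf e_\ell),
\]
and so $a_i=-1-(A^{-1})_{i1}-(A^{-1})_{i\ell}$. This avoids summing over the whole inverse and only needs the first and last columns of $A^{-1}$.

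By Lemma \ref{lem: sol_tridiagonal_system},
\[
(A^{-1})_{i1}=-\frac{D[b_1]\,D[b_i,\dots,b_\ell]}{n} \quad\text{and}\quad (A^{-1})_{i\ell}=-\frac{D[b_i,\dots,b_1]\,D[b_\ell]}{n},
\]
with $n=N[b_1,\dots,b_\ell]$. With the natural convention $D[b]=1$, since $b=\frac{b}{1}$, these give exactly
\[
a_i=-\left(1-\frac{D[b_i,\dots,b_\ell]+D[b_i,\dots,b_1]}{n}\right).
\]

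The main obstacle is bookkeeping at the boundary cases, namely $i=1$, $i=\ell$, and $\ell=1$, where $\mathbf e_1=\mathbf e_\ell$ and the row-sum identity changes to $-b_1+2$ with a double correction. These must be checked to match the conventions $D[b_1]=1$ and the identities $N[b_{j+1},\dots,b_\ell]=D[b_j,\dots,b_\ell]$ used in the proof of the lemma. For $\ell=1$ one checks directly that $a_1=-(1-2/n)$ with $n=b_1$, which agrees with the formula.
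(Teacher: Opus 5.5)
Your argument is correct and is essentially the paper's. The paper quotes, from Barlow and Hirzebruch, that $\bar a=\frac{1}{n}(\bar\lambda+\bar\mu)-\mathbf 1$ with $A\bar\lambda=-n\mathbf e_1$ and $A\bar\mu=-n\mathbf e_\ell$; this is precisely your identity $\mathbf a=-\mathbf 1-A^{-1}(\mathbf e_1+\mathbf e_\ell)$, and the paper then reads off the first and last columns of $A^{-1}$ from Lemma \ref{lem: sol_tridiagonal_system} exactly as you do. The only difference is that you derive that identity yourself via adjunction and the decomposition $\mathbf v=-A\mathbf 1-\mathbf e_1-\mathbf e_\ell$; this holds uniformly for all $\ell\geq 1$ (for $\ell=1$ the row sum is $-b_1$ and the two Kronecker terms coincide), so no separate boundary check is needed.
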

\begin{proof}
	From \cite[Sec. 6.1]{Bar99} and \cite{Hir53}, then the vector $\bar{a}=\left(a_1,\dots, a_\ell\right)$ of the discrepancies of $p$  is equal to $\bar{a}=\frac{1}{n}\left(\bar{\lambda}+\bar{\mu}\right)-\sum_{i=1}^\ell v_i$, where $v_1,\dots, v_\ell$ is the canonical basis of $\mathbb R^\ell$, while $\bar{\lambda}$ and $\bar{\mu}$ are the solutions of the linear systems $A\bar{\lambda}=-nv_1$ and $A\bar{\mu}=-nv_\ell$.  Thus, $\bar{\lambda}=-nA^{-1}v_1$ and $\bar{\mu}=-nA^{-1}v_\ell$, so the thesis follows directly from Lemma \ref{lem: sol_tridiagonal_system}. 
\end{proof}
\begin{remark}
	One automatically obtains the well-known fact that a cyclic quotient singularity is a canonical singularity if and only if it is a rational double point, namely a singularity of type $\frac{1}{n}(1,n-1)$, $n\geq 2$. Indeed, if $p\in X$ of type $\frac{1}{n}(1,a)$ is a canonical singularity, then $0=a_1=-\left(1-\frac{a+1}{n}\right)$, which gives $a=n-1$. 	Conversely, if $p$ is a rational double point of type $\frac{1}{n}(1,n-1)$, then $\frac{n}{n-1}=[2, \dots, 2]$ with length $n-1$, then $a_i=-\left(1-\frac{(n-1-i+1)+i}{n}\right)=0$ for any $i=1,\dots, n-1$.
\end{remark}
\section{A criterion for the minimality of surfaces of general type}\label{sec: criterion_minimality_surf_gen_type}
In this section, we give a numerical criterion to establish the minimality of a surface of general type. More precisely, we provide a method to ensure that any curve contracted to the minimal model of the surface is contained in the support of a certain effective $\mathbb R$-divisor on the surface.

First of all we prove the following proposition, see also \cite[Prop. 4.4]{BP12},\cite[Prop. 4.3]{FP15}, and \cite[Cor. 6.8]{BP16}.
\begin{proposition}\label{prop: diseg_intersezione}
	Let $S$ be a surface of general type and let $E$ be a $(-1)$-curve of $S$. Then  any irreducible curve $C$ of $S$ satisfies 
	\[
	CE\leq \max\{1, 2p_a(C)-C^2-3\},
	\]
	and if $C$ is a smooth irreducible curve with $CE=1$, then $C^2\leq -2$. In other words, two $(-1)$-curves can not intersect. 
\end{proposition}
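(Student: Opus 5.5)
The approach is to contract $E$ and use the fact that the result is still of general type, hence has no smooth rational curves of nonnegative self-intersection and no curves of arithmetic genus too small relative to their self-intersection. Let $\sigma\colon S\to S'$ be the contraction of $E$. Then $S'$ is smooth and of general type. Let $C\neq E$ be irreducible with $k:=CE$, and let $C'=\sigma(C)$. Then $C=\sigma^*C'-kE$, so
\[
C'^2=C^2+k^2,\qquad K_{S'}C'=K_SC-k .
\]
By adjunction,
\[
2p_a(C')-2=C'^2+K_{S'}C'=2p_a(C)-2+k^2-k .
\]
If $C=E$, then $CE=-1$ and the inequality holds trivially.

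The key input is a bound for irreducible curves on a surface of general type. By Bombieri~\cite{Bomb73}, or via the nefness of $K$ on the minimal model, every irreducible curve $\Gamma$ on a smooth surface of general type satisfies $K\Gamma\ge -1$, with $K\Gamma=-1$ exactly for $(-1)$-curves. I will also use that $\Gamma^2\ge 0$ forces $K\Gamma>0$: a nef curve with $\Gamma^2\ge 0$ meets the big divisor $K$ positively unless $K\Gamma=0$ and $\Gamma^2=0$, which Hodge index excludes, since $K$ is big and $\Gamma\not\equiv 0$.

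Apply this to $C'$ on $S'$. If $K_{S'}C'\ge 1$, then $K_SC-k\ge 1$, that is, $k\le K_SC-1=2p_a(C)-2-C^2-1=2p_a(C)-C^2-3$. If $K_{S'}C'\le 0$, then $C'$ is either a $(-1)$-curve or a $(-2)$-curve. Here $C'$ is smooth rational with $C'^2\in\{-1,-2\}$ and $K_{S'}C'\in\{-1,0\}$, and since $p_a(C')=0$ we get $p_a(C)=0$ and $k^2-k=0$, so $k\le 1$. Combining the two cases gives $CE\le\max\{1,2p_a(C)-C^2-3\}$.

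For the second claim, take $C$ smooth with $CE=1$. Then $p_a(C')=p_a(C)$ and $C'^2=C^2+1$. If $C^2\ge -1$, then $C'^2\ge 0$, so $K_{S'}C'>0$ by the above; here I expect to need more care. If $C$ is rational, then $C'$ is a smooth rational curve with nonnegative square, which is impossible on a surface of general type (it would be ruled). So the only problematic case is $p_a(C)\ge 1$ with $C^2=-1$. But a curve on $S$ with $C^2=-1$ and $p_a\ge1$ is not excluded by general type, so as stated the claim must intend $C$ rational, or be read in the context "two $(-1)$-curves cannot intersect". I will prove it for smooth rational $C$, which covers that context: if $C^2\ge -1$, then $C'$ is a smooth rational curve with $C'^2\ge 0$, contradicting general type. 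Hence $C^2\le -2$, and in particular two $(-1)$-curves meeting with $CE=1$ is impossible. The case $CE\ge 2$ for two $(-1)$-curves is excluded by the first part, since then $\max\{1,2\cdot0+1-3\}=1$.

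The main obstacle is justifying the lower bound $K\Gamma\ge -1$ with equality only for $(-1)$-curves, together with the positivity statement when $\Gamma^2\ge 0$. I would derive both by pushing forward to the minimal model, where $K$ is nef and big, and tracking multiplicities of the exceptional curves.
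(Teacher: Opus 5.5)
Your proof is correct and follows the paper's route. You contract $E$, use $K_{S'}\sigma(C)=K_SC-CE$, and invoke the fact that a curve $C'$ with $K_{S'}C'\le 0$ on a surface of general type is a smooth rational $(-1)$- or $(-2)$-curve; the paper cites this from Bauer--Pignatelli in the form ``$\sigma(C)$ is singular when $CE\ge 2$, hence $K_{S'}\sigma(C)>0$'', which is equivalent to your $p_a(C')=p_a(C)+k(k-1)/2$ argument. The second claim is likewise handled as in the paper, by pushing $C$ down to a smooth rational curve of square $C^2+1$. Your restriction of the second claim to rational $C$ matches what the paper's proof actually assumes, and it is genuinely necessary: the strict transform of a smooth ample curve through the blown-up point is smooth with $CE=1$ and $C^2\ge 0$, so the failure range is all of $C^2\ge -1$ with $p_a(C)\ge 1$, not only $C^2=-1$.
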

\begin{proof}
	Assume that $CE\geq 2$. Let $b\colon S\to S'$ be the blow-down given by the contraction of $E$. Then $b(C)$ becomes singular and by \cite[Rem. 4.3]{BP12}, then $K_{S'}b(C)>0$. Thus, we have 
	\[
	\begin{split}
		0<K_{S'}b(C) & =(K_S-E) (C+(C E)E) \\
		& =K_S C-C E-E C+CE \\
		&= 2p_a(C)-2-C^2-CE,
	\end{split}
	\]
	where the latter holds as $2p_a(C)-2=K_S\cdot C+C^2$. 
	
Now, let us suppose that $C\cdot E=1$ with $C$ smooth and rational. Then, after contracting $E$, $b(C)$ would still be a smooth rational curve and from \cite[Prop. 2.3]{BPHW2004}, then $b(C)^2<0$. Thus, we have 
	\[
	   0>b(C)^2=(C+E)\cdot (C+E)=C^2+1,
	\]
	so $C^2\leq -2$. 
\end{proof}
The first main theorem of the paper is the following 
\begin{theorem}\label{thm: minimality_criterion}
Let $S$ be a surface of general type and assume that the numerical canonical class $K_S$ of $S$ can be written as $K_S\equiv A-B$, where $A$ and $B$ are effective $\mathbb R$-divisors. 
 Write \(B=\sum \lambda_i C_i\), where the \(C_{i}\) are distinct irreducible curves and \(\lambda_i \in \mathbb{R}_{\geq 0}\). If
 \begin{equation}\label{eq: minimality_criterion}
 	\tau(B):=\sum_{i}\lambda_i \cdot \max\{1, 2p_a(C_i)-C_i^2-3\}<1,
 \end{equation}
 then $A$ contains all curves contracted by the morphism onto the minimal model $S\to S_{\min}$. In particular, it contains all $(-1)$-curves of $S$. 
\end{theorem}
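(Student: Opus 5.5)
Fix a $(-1)$-curve $E$ on $S$ and bound $B\cdot E$. If $E$ is not one of the $C_i$, then each $C_iE\geq 0$. Proposition \ref{prop: diseg_intersezione} gives $C_iE\le \max\{1,2p_a(C_i)-C_i^2-3\}$, so
\[
BE=\sum_i\lambda_i C_iE\le \tau(B)<1 .
\]
Hence $AE=K_SE+BE=BE-1<0$. Since $A$ is effective, $AE<0$ forces $E$ to be a component of $A$. If $E=C_{i_0}$ for some $i_0$, then $p_a(E)=0$ and $E^2=-1$, so its weight is $\max\{1,-2\}=1$. Thus $\lambda_{i_0}\le\tau(B)<1$. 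Writing $B=\lambda_{i_0}E+B_0$, we get $AE=-1+BE=-1-\lambda_{i_0}+B_0E\le -1-\lambda_{i_0}+(\tau(B)-\lambda_{i_0})<0$. So again $E\subseteq\operatorname{Supp}A$. This proves the statement for $(-1)$-curves.

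For the full statement, induct on the number of blow-downs in $S\to S_{\min}$. Let $E\subseteq\operatorname{Supp}A$ be a $(-1)$-curve and $b\colon S\to S'$ its contraction. Then $K_S=b^*K_{S'}+E$. Set $A'=b_*A$ and $B'=b_*B$; these are effective, and pushing forward gives $K_{S'}\equiv A'-B'$ (numerical equivalence is preserved under $b_*$ for a blow-down). The key step is $\tau(B')\le\tau(B)$. Let $C_i'=b(C_i)$ for $C_i\neq E$, and put $m_i=C_iE$. Then $p_a(C_i')=p_a(C_i)+\binom{m_i}{2}$ and $C_i'^2=C_i^2+m_i^2$. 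Hence
\[
2p_a(C_i')-C_i'^2-3=2p_a(C_i)-C_i^2-3-m_i\le 2p_a(C_i)-C_i^2-3 .
\]
Taking the max with $1$ preserves the inequality. Dropping the possible term $E$ only decreases $\tau$. So $\tau(B')\le\tau(B)<1$, and $S'$ is still of general type. By induction, every curve contracted by $S'\to S_{\min}$ lies in $\operatorname{Supp}A'$.

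Now let $\Gamma$ be a curve contracted by $S\to S_{\min}$. Since the minimal model is unique for general type, $S\to S_{\min}$ factors through $b$. Either $\Gamma=E\subseteq\operatorname{Supp}A$, or $b(\Gamma)$ is a curve contracted by $S'\to S_{\min}$, so $b(\Gamma)\subseteq\operatorname{Supp}A'=b(\operatorname{Supp}A\setminus E)$. In the second case $\Gamma$ is a component of $A$. The termination of this induction is automatic, since $\rho(S)$ drops at each step.

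The main obstacle I expect is the bookkeeping in the case where $E$ itself is a component of $B$. There one must check that the argument above does not need $C_{i_0}E\ge0$. One must also check that the passage to $S'$ handles the cancellation between $E$-coefficients in $A$ and $B$. Since $b_*$ kills $E$ in both, this should be harmless.
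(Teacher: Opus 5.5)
Your proof is correct and follows the paper's argument. Proposition~\ref{prop: diseg_intersezione} gives $BE\le\tau(B)<1$, so $AE<0$ and $E$ lies in the support of $A$; after contracting $E$ you check $\tau(B')\le\tau(B)$ using $2p_a(C_i')-C_i'^2-3=2p_a(C_i)-C_i^2-3-C_iE$, which the paper writes as $K_{S'}C_i'-1=K_SC_i-C_iE-1$, and then iterate. Your extra bookkeeping only spells out what the paper leaves implicit: the separate case $E=C_{i_0}$ is already covered by the termwise bound since $E\cdot E=-1\le 1$, and the explicit induction uses $b_*$ and uniqueness of the minimal model.
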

\begin{proof}
	Let us consider a $(-1)$-curve $E$ of $S$. Then $K_SE=-1$ and from Proposition \ref{prop: diseg_intersezione} we have 
	\[
	  A E= (K_S+B) E= -1+ \sum_i \lambda_i C_iE \leq -1+\sum_i\lambda_i \max\{1, 2p_a(C_i)-C_i^2-3\}<0.
	\]
	Thus, $E$ is a component of $A$. 
	Blowing down \(E\) yields a new surface \(S^{\prime }\) and a new pair of effective \(\mathbb{R}\)-divisors \(A^{\prime }\) and \(B'=\sum \lambda_iC_i'\). We observe that
	\[K_{S^{\prime }}C_{i}^{\prime }-1=K_{S}C_{i}-C_{i}E-1\le  K_SC_i-1
	\]
	for any irreducible curve \(C_{i}'\) in \(B'\). Consequently, \(\tau(B')\leq \tau(B)<1\), which implies that \(A^{\prime }\) contains all \((-1)\)-curves of \(S^{\prime }\) by the same argument as above. By repeating this process, we obtain the minimal model of \(S\) in a finite number of steps.
\end{proof}
The rest of the paper is devoted to adapting Theorem \ref{thm: minimality_criterion} to product-quotient surfaces of general type, where the choice of the effective divisors $A$ and $B$ becomes clear, see Proposition \ref{prop: K_S_as_A-B} and Theorem \ref{thm: min_Function_localization_-1_curves}. First, we recall the decomposition of the singular fibers of the two natural fibrations of such surfaces.
\section{Fibre structure of a standard isotrivial fibration}\label{sec: fibre_structure}
Let us consider a finite group $G$ acting faithfully on two smooth projective curves $C_1$ and 
and $C_2$ having genus at least $2$. We consider the diagonal action of $G$ on the product $C_1 \times C_2$. 

Thus, $X:=(C_1\times C_2)/G$ may have at most cyclic quotient singularities arising from points of $C_1\times C_2$ having no-trivial stabilizer. 
\begin{definition}
	The minimal resolution of singularities $\rho\colon S\to X$ of $X$ is called \textit{product-quotient} surface of the \textit{quotient model} $X$. 
\end{definition}

Any product-quotient surface $S$ inherits from $X$ two fibrations\footnote{By abuse of notation, we still denote by $f_j\colon X\to C_j/G$ the two natural fibrations of $X$.}
\[
 f_1\colon S\to C_1/G \qquad \makebox{and} \qquad f_2\colon S\to C_2/G,
\] 
whose general fibres we denote respectively by $F_1$ and $F_2$. 
We are interested to study the structure of the singular fibres in further detail. Thus, we need 
\begin{theorem}(\cite[Thm. 2.1]{Ser96})\label{thm:Serrano}
	Let $\rho \colon S \to X=(C_1 \times C_2)/G$ be a product-quotient surface.
	Consider the natural fibration $f_2 \colon S \to C_2/G$.
	Take any point over $\bar{y} \in C_2/G$ and let $F$ denote the schematic fibre of $f_2$ over $\bar{y}$. Then
	\begin{itemize}
		\item[$\boldsymbol{(i)}$] The reduced structure of $F$ is the union of a smooth
		irreducible curve $Y$, called the central component of $F$,
		and either none or at least two mutually disjoint \emph{Hirzebruch-Jung strings}, each
		meeting $Y$ at one point, and each being contracted by $\rho$ to a singular point of $X$.
		These strings are in one-to-one
		correspondence with the branch points of $C_1 \to C_1/H$, where $H
		\subseteq G$ is the stabilizer of $y$, and if one string resolves a singularity of type $\frac{1}{n}(1,\,a)$, then the corresponding branch point has ramification index $n$.
		\item[$\boldsymbol{(ii)}$] The intersection of a string with $Y$ is transversal,
		and it takes place at only one of the end components of the string.
		\item[$\boldsymbol{(iii)}$] $Y$ is isomorphic to $C_1/H$, and has multiplicity
		equal to $|H|$ in $F$.
	\end{itemize}
	An analogous statement holds if one considers the other 
	$f_1 \colon S \to C_1/G$.
\end{theorem}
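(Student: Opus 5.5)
This statement is a result of Serrano, cited from \cite[Thm. 2.1]{Ser96}, so I would reconstruct its proof by working locally over $\bar y$ and following the factorization through an intermediate quotient.

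\textbf{Local model.} Fix $y\in C_2$ over $\bar y$ and let $H\subseteq G$ be its stabilizer, which is cyclic of order $m$. Choose a small $H$-invariant disc $U\ni y$ whose translates $gU$, for $g\in G/H$, are disjoint. Over a neighbourhood of $\bar y$ we then have
\[
f_2^{-1}(\text{nbhd of }\bar y)\cong (C_1\times U)/H .
\]
Let $t$ be a local coordinate on $U$ on which a generator of $H$ acts as $t\mapsto \zeta t$. The function $t^m$ descends to a local parameter at $\bar y$, and the reduced fibre of $(C_1\times U)/H$ over $\bar y$ is $(C_1\times\{y\})/H\cong C_1/H$.

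\textbf{Singularities and the central component.} The singular points of $X$ on this fibre are the images of the points $(x,y)$ with nontrivial stabilizer $H_x\cap H$. These correspond exactly to the branch points of $C_1\to C_1/H$, and the ramification index equals the order $n$ of the stabilizer. Linearizing the action at such a point gives a singularity of type $\frac1n(1,a)$. I take $Y$ to be the strict transform of $C_1/H$ in $S$; it is smooth because $C_1/H$ is a smooth curve and the resolution is an isomorphism away from the singular points. Since the fibre of $t^m$ is $m$ times the reduced curve $(C_1\times\{y\})/H$ away from singular points, $Y$ appears with multiplicity $|H|$. This gives (iii).

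\textbf{Strings.} The minimal resolution of each $\frac1n(1,a)$ point inserts a Hirzebruch–Jung chain contracted by $\rho$, and these chains are disjoint because they lie over distinct points. The transversality and end-component statement in (ii) follows from the toric description of the resolution: in local coordinates $(x,t)$, the curve $\{t=0\}$ is one of the two toric boundary divisors, so its strict transform meets only the first (or last) exceptional curve, transversally at one point.

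\textbf{Counting the strings.} If at least one string occurs, I would show there are at least two. This is the step I expect to be the main obstacle. By Riemann–Hurwitz for the cyclic cover $C_1\to C_1/H$, which is cyclic of order $m$, a cyclic cover cannot have exactly one branch point: the local monodromies must multiply to $1$ in the abelian group $H$. Hence the number of branch points is $0$ or at least $2$.

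The statement for $f_1$ follows by the symmetric argument.
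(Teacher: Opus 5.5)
The paper gives no proof of this statement. It is quoted from Serrano \cite[Thm.~2.1]{Ser96} and used as a black box. Your reconstruction is correct and follows the standard argument:
\begin{itemize}
\item the local model $(C_1\times U)/H$ near the fibre;
\item the identification of the singular points on the fibre with the $H$-orbits of points of $C_1$ having nontrivial stabilizer in $H$ (these are genuine cyclic quotient singularities because $G$ acts faithfully on both curves);
\item the toric description of the Hirzebruch--Jung resolution, which gives (ii);
\item the relation $\prod_j c_j=1$ among the local monodromies of the abelian cover $C_1\to C_1/H$, which rules out exactly one string.
\end{itemize}

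Two points should be stated more precisely.

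First, smoothness of $Y$ at the points lying over $\Sing(X)$ does not follow from $\rho$ being an isomorphism elsewhere. It follows from the toric picture you already use for (ii). Alternatively, $Y$ maps finitely and birationally onto the image of $C_1\times\{y\}$ in $X$, and that image is isomorphic to the smooth curve $C_1/H$ (locally, $\mathbb{C}[x,t]^{\mu_n}$ restricts to $\mathbb{C}[x^n]$ on $\{t=0\}$), so $Y\cong C_1/H$.

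Second, excluding a single branch point is a monodromy (Riemann existence) argument, not a consequence of Riemann--Hurwitz. Riemann--Hurwitz alone allows, for instance, a $\mathbb{Z}_3$-cover of an elliptic curve branched at one point. The reasoning you actually write down is the right one.

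An alternative for that last step: once (ii), (iii) and the fibre coefficients are known, $F\cdot Y=0$ gives $Y^2=-\sum_p a'_p/n_p$, as in Theorem~\ref{thm: dec_Fibres_self_int_and_genus}. A single summand with $1\le a'_p\le n_p-1$ cannot be an integer, so exactly one string is impossible.
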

In particular, we have
\begin{equation}
	F_1 \simeq C_2, \quad  F_2 \simeq C_1,  \quad \textrm{and} \quad  F_1 F_2 = |G|. 
\end{equation}
Hence both fibrations $f_1$ and $f_2$ of $S$ are \emph{isotrivial}.

We denote by $F_2^{\overline{y}}:=f_2^*(\bar{y})$ the fibre of $S$ over a point $\bar{y}\in C_2/G$, and by $Y_2^{\bar{x}}$ the central component of $F_2^{\bar{y}}$, according to Theorem \ref{thm:Serrano}. Similarly,  $F_1^{\overline{x}}:=f_1^*(\bar{x})$  denotes the fibre of $S$ over a point $\bar{x}\in C_1/G$ and $Y_1^{\bar{x}}$ is its central component.   
%
%
 \begin{definition}\label{defn: Theta_i}\cite[Def. 1.12]{BP12}
 	Given a Galois cover $\pi\colon C\to C/G$ with Galois group $G$, we denote by \(e_{\overline{z}}\) the ramification index of any point in the fibre \(\pi^{-1}(\bar{z})\). Recall that this index coincides with the order of the stabilizer of any point in the corresponding fibre. 
 	\\
 	The signature of $\pi$ is the list of the genus of the base curve $C/G$ and the increasing sequence of the ramification indices of $\pi$, namely  $t:=(g(C/G)\vert e_{\overline{z_1}},\dots, e_{\overline{z_r}})$ with 
 	\[
 	e_{\overline{z_1}}\leq \dots \leq e_{\overline{z_r}}.
 	\]
 	When the genus of the base curve is zero, namely $C/G\cong \mathbb P^1$, we simply write $t=(e_{\overline{z_1}},\dots, e_{\overline{z_r}})$. 
    \\
	We also define
	\[
	\Theta=\Theta(t):=2g(C/G)-2+\sum_{i=1}^r
\left(1-\frac{1}{e_{\overline{z_i}}}\right).
\]
\end{definition}
 
Thus, for a point \(\overline{x}\in C_1/G\), we denote by \(e_{\overline{x}}\) the ramification index of the quotient map \(\pi_1\colon C_1\to C_1/G\), and similarly for a point \(\overline{y}\in C_2/G\). Furthermore, for brevity, we set \(\Theta_1:=\Theta(t_1)\) and \(\Theta_2:=\Theta(t_2)\), where \(t_{1}\) and \(t_{2}\) are the signatures of \(\pi _{1}\) and \(\pi _{2}\), respectively.
\begin{proposition}\label{prop:  coeffcients_F_i}
	Locally around  a singular point $\overline{(x,y)}\in X$ of type $\frac{1}{n}(1,a)$, with $\frac{n}{a}=[b_1,\dots, b_\ell]$, the fibres $F_1^{\overline{x}}$  and $F_2^{\overline{y}}$ of $S$ decomposes as follows
	\begin{equation*}\label{eq: Serrano_dec}
		F_1^{\overline{x}}= e_{\overline{x}}Y_1^{\overline{x}}+\sum_{i=1}^\ell e_{\overline{x}}\frac{D[b_i, \dots, b_\ell]}{n} E_i, \ \  \makebox{and} \quad F_2^{\overline{y}}= e_{\overline{y}}Y_2^{\overline{y}}+ \sum_{i=1}^\ell e_{\overline{y}}\frac{D[b_i, b_{i-1}, \dots, b_1]}{n} E_i,
	\end{equation*}
where $E_1\cup \dots\cup  E_\ell$ is the Hirzebruch-Jung string  of $\overline{(x,y)}$.
\end{proposition}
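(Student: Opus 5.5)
The plan is to use that each fibre is numerically trivial on the exceptional curves, $F_1^{\overline{x}}\cdot E_i=0$ for all $i$. This gives a tridiagonal linear system for the coefficients of the $E_i$, which Lemma \ref{lem: sol_tridiagonal_system} solves (or which one checks directly by the continued-fraction recursion). I treat $F_1^{\overline{x}}$; the case of $F_2^{\overline{y}}$ is symmetric, with the string read in the opposite direction.

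\emph{Step 1: set up the local equations.} By Theorem \ref{thm:Serrano}(iii), the central component $Y_1^{\overline{x}}$ has multiplicity $|H|$ in $F_1^{\overline{x}}$, where $H$ is the stabilizer of a point of $C_1$ over $\overline{x}$. Hence $|H|=e_{\overline{x}}$. By Theorem \ref{thm:Serrano}(i)–(ii), near $\overline{(x,y)}$ the fibre is
\[
F_1^{\overline{x}}=e_{\overline{x}}Y_1^{\overline{x}}+\sum_{i=1}^\ell c_iE_i ,
\]
and $Y_1^{\overline{x}}$ meets the string transversally at exactly one end component. No other components of the fibre meet this string. Since $F_1^{\overline{x}}$ is a fibre, $F_1^{\overline{x}}\cdot E_i=0$ for every $i$. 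Setting $c_0:=e_{\overline{x}}$ and $c_{\ell+1}:=0$, and assuming $Y_1^{\overline{x}}$ meets $E_1$, this reads
\[
c_{i-1}-b_ic_i+c_{i+1}=0,\qquad i=1,\dots,\ell .
\]
Equivalently, $A(b_1,\dots,b_\ell)\,\bar c=-e_{\overline{x}}v_1$. The matrix is invertible by Remark \ref{rem: det_HJ_matrix}, so the solution is $\bar c=-e_{\overline{x}}A^{-1}v_1$. Lemma \ref{lem: sol_tridiagonal_system}, with $\min(i,1)=1$ and $D[b_1]$ read as $1$, then gives
\[
c_i=e_{\overline{x}}\,\frac{D[b_i,\dots,b_\ell]}{n}.
\]
As a cross-check one can verify this directly. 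Put $m_i:=D[b_i,\dots,b_\ell]=N[b_{i+1},\dots,b_\ell]$, which holds by the identity at the end of the proof of Lemma \ref{lem: sol_tridiagonal_system}. Then $m_\ell=1$, $m_{\ell+1}=0$, $m_0=n$, and the numerator recursion $N[b_i,\dots,b_\ell]=b_iN[b_{i+1},\dots,b_\ell]-N[b_{i+2},\dots,b_\ell]$ is exactly the system above with $c_i=e_{\overline{x}}m_i/n$.

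\emph{Step 2: the same for $F_2^{\overline{y}}$.} Its central component $Y_2^{\overline{y}}$ must meet the opposite end $E_\ell$. The same computation, with $A^{-1}v_\ell$ and reversed string, then produces $D[b_i,b_{i-1},\dots,b_1]/n$.

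\emph{Main obstacle: identifying which end each central component meets.} Theorem \ref{thm:Serrano} only says ``one of the end components''. The roles are forced by the conventions. Locally, $X$ is $\mathbb C^2/\mathbb Z_n$ with coordinates $(u,v)$ on $C_1\times C_2$. The fibre of $f_1$ is the image of $\{u=0\}$ and the fibre of $f_2$ is the image of $\{v=0\}$; these are the two toric boundary curves, and their strict transforms meet the two opposite ends of the Hirzebruch–Jung string. I would fix the labelling of $E_1,\dots,E_\ell$ (type $\frac1n(1,a)$ with $\frac na=[b_1,\dots,b_\ell]$, acting by $(\zeta u,\zeta^a v)$) via the standard toric resolution. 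In that description the curve $\{u=0\}$ corresponds to the ray adjacent to $E_1$.

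This can also be checked a posteriori, which gives an independent consistency test. Choosing the other end would give $c_\ell=e_{\overline{x}}a'/n$ with $aa'\equiv1 \pmod n$ in place of $c_1=e_{\overline{x}}a/n$. The local intersection $F_1\cdot F_2$ near the point then allows exactly one of the two assignments: the product of the multiplicities at $E_1$ must match the pullback of the local intersection number $1/n$ times $e_{\overline{x}}e_{\overline{y}}$, that is, $\rho^*$ of Weil divisors meeting with multiplicity $1/n$.
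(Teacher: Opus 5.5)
Your Steps 1 and 2 are the paper's proof. The paper writes each fibre as $e$ times its central component plus $\sum\lambda_iE_i$, and uses $F\cdot E_j=0$ together with the assertion that $Y_2^{\overline{y}}$ meets $E_\ell$. This gives $A\bar\lambda=-e_{\overline{y}}v_\ell$, which it solves with Lemma~\ref{lem: sol_tridiagonal_system}. Your computation for $F_1^{\overline{x}}$ via $v_1$ is the mirror image and is correct.

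The flaw is in your treatment of the ``main obstacle''. The paper does not derive which end each central component meets; Theorem~\ref{thm:Serrano} only says ``one of the end components''. So the statement encodes a normalization, and your toric argument gets that normalization backwards.

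\textbf{The toric identification is reversed.} Take coordinates $(u,v)$ on $C_1\times C_2$ with the generator acting by $(\zeta u,\zeta^a v)$.
\begin{itemize}
\item $\{u=0\}$ is the divisor of the ray $(1,0)$ in $N=\mathbb{Z}^2+\mathbb{Z}\frac1n(1,a)$.
\item $E_1$, with self-intersection $-b_1$ and $b_1=\lceil n/a\rceil$, is the divisor of the ray through $\frac1n(1,a)$.
\item That ray is adjacent to $(0,1)$, i.e.\ to $\{v=0\}$, not to $\{u=0\}$.
\end{itemize}
For example, take $\frac15(1,2)$ with $\frac52=[3,2]$. The chart of the cone spanned by $(0,1)$ and $\frac15(1,2)$ has coordinates $u^{-2}v$ and $u^5$. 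In it, $E_1=\{u^5=0\}$ and the strict transform of $\{v=0\}$ is $\{u^{-2}v=0\}$.

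A valuation check gives the same answer. The coefficient of $E_1$ in $F_1^{\overline{x}}$ is $\mathrm{ord}_{E_1}(u^{e_{\overline{x}}})=e_{\overline{x}}/n$. It is not $e_{\overline{x}}a/n=e_{\overline{x}}D[b_1,\dots,b_\ell]/n$.

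So under the convention you fix, the two formulas come out exchanged. The proposition as stated corresponds to the normalization in which the stabilizer acts by $(\zeta^a u,\zeta v)$, i.e.\ the string is labelled so that $Y_1^{\overline{x}}$ meets $E_1$. This is also the normalization that yields $(Y_1^{\overline{x}})^2=-\sum a_p/n_p$ in Theorem~\ref{thm: dec_Fibres_self_int_and_genus}.

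\textbf{The a posteriori test cannot decide.} Under either assignment, the local contribution to $F_1^{\overline{x}}F_2^{\overline{y}}$ is $e_{\overline{x}}e_{\overline{y}}/n$. The reason is that $Y_2^{\overline{y}}$ always meets the end of the string where $F_1^{\overline{x}}$ has coefficient $e_{\overline{x}}/n$, since $D[b_1]=D[b_\ell]=1$. The test therefore does not distinguish the two cases.

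You should either state the normalization explicitly as a convention, as the paper implicitly does, or fix it with the valuation computation above.
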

\begin{proof}
	By Theorem \ref{thm:Serrano}$(i)$ and $(iii)$, then we can write $F_1^{\overline{x}}= e_{\overline{x}}Y_1^{\overline{x}}+\sum_{i=1}^\ell\mu_i E_i$ and $ F_2^{\overline{y}}= e_{\overline{y}}Y_2^{\overline{y}}+ \sum_{i=1}^\ell \lambda_i E_i$, so we only need to compute the vectors $\bar{\mu}$ and $\bar{\lambda}$. We determine $\bar{\lambda}$, in a similar way one obtains also $\bar{\mu}$. 
	From Theorem \ref{thm:Serrano}$(i)$ and $(ii)$, then $Y_2^{\overline{y}}E_\ell=1$ and $Y_2^{\overline{y}}E_j=0$ otherwise, for $j=1, \dots, \ell-1$. This permits to compute $\bar{\lambda}$ via intersection theory. Indeed, we have 
	\[
	F_2^{\overline{y}}E_j=0=e_{\overline{y}}Y_2^{\bar{y}}E_j + \left( \sum_{i=1}^\ell \lambda_i E_i\right)E_j, \qquad j=1,\dots, \ell,
	\]
	which is equivalent to solve the linear system $A\bar{\lambda}=-e_{\bar{y}}v_{\ell}$, where $A$ is the Hirzebruch-Jung matrix of weights $(b_1,\dots, b_\ell)$ and $v_1,\dots, v_\ell$ is the canonical basis of $\mathbb R^\ell$. Thus, $\bar{\lambda}=-e_{\bar{y}}A^{-1}v_{\ell}$ and the thesis follows directly from Lemma \ref{lem: sol_tridiagonal_system}. 
\end{proof}
Proposition \ref{prop:  coeffcients_F_i} justifies the following
\\
	{\bf Notation.} Given a cyclic quotient singularity \(p\in \text{Sing}(X)\) of type \(\frac{1}{n_p}(1, a_p)\), we denote by \([b_{1}^p, \dots, b_{\ell_p}^p]\) the Hirzebruch-Jung continued fraction of \(\frac{n_{p}}{a_{p}}\). We denote by $D_{i,\ell_p}^p$ the denominator of the partial continued fraction \([b_{i}^p, b_{i+1}^p, \dots, b_{\ell_p}^p]\) and by \(D_{i,1}^p\) the denominator of \([b_{i}^p, b_{i-1}^p\dots, b_{1}^p]\). Finally, the Hirzebruch-Jung string arising from the resolution of \(p\) is denoted by \(E_{1}^p \cup \dots \cup E_{\ell_p}^p\).
\begin{theorem}\label{thm: dec_Fibres_self_int_and_genus}
	Let us consider a singular fibre $F_1^{\overline{x}}$ of the isotrivial fibration $f_1\colon S\to C_1/G$ over a point $\overline{x}\in C_1/G$ and its central component $Y_1^{\overline{x}}$. Then:
	\[
		\begin{split}
		F_1^{\overline{x}} & =e_{\overline{x}}Y_1^{\overline{x}}+ \sum_{p\in \Sing(X)\cap f_1^{-1}(\overline{x})}\left(\sum_{i=1}^{\ell_p} e_{\overline{x}}\frac{D_{i, \ell_p}^p}{n_p}E_{i}^p\right), \\ 
		(Y_1^{\overline{x}})^2 & =\sum_{p\in \Sing(X)\cap f_1^{-1}(\overline{x})}-\frac{a_p}{n_p}\in \mathbb Z, \qquad \makebox{and} \\
				2g(C_2)-2 & =e_{\overline{x}}\left(2g(Y_1^{\overline{x}})-2+\sum_{p\in \Sing(X)\cap f_1^{-1}(\overline{x})}\left(1-\frac{1}{n_p}\right)\right).
	\end{split}
	\]
	Analogously, given a singular fibre $F_2^{\overline{y}}$ over $\overline{y}\in C_2/G$, then 
	\[
	\begin{split}
		F_2^{\overline{y}} & =e_{\overline{y}}Y_2^{\overline{y}}+ \sum_{p\in \Sing(X)\cap f_2^{-1}(\overline{y})} \left(\sum_{i=1}^{\ell_p}e_{\overline{y}}\frac{D_{i, 1}^p}{n_p} E_{i}^p\right), \\ 
		(Y_2^{\overline{y}})^2 & =\sum_{p\in \Sing(X)\cap f_2^{-1}(\overline{y})}-\frac{a'_p}{n_p}\in \mathbb Z, \qquad \makebox{and} \\
		2g(C_1)-2 & =e_{\overline{y}}\left(2g(Y_2^{\overline{y}})-2+\sum_{p\in \Sing(X)\cap f_2^{-1}(\overline{y})}\left(1-\frac{1}{n_p}\right)\right),
	\end{split}
	\]
	where $1\leq a_p'\leq n_p-1$ denotes the multiplicative inverse of $a_p$ in $\left(\mathbb Z/n_p\right)^*.$
	Finally, the intersection between two central components $Y_1^{\overline{x}}$ and $Y_2^{\overline{y}}$ is equal to 
	\[
	 Y_1^{\overline{x}}Y_2^{\overline{y}}=\frac{\vert G\vert}{e_{\overline{x}}e_{\overline{y}}}-\sum_{p\in \Sing(X)\cap f_1^{-1}(\overline{x})\cap f_2^{-1}(\overline{y})}\frac{1}{n_p}.
	\]
\end{theorem}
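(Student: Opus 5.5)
The proof has four parts: the fibre decomposition, the self-intersection of the central component, its genus, and the intersection of two central components.

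\textbf{Fibre decomposition.} By Theorem \ref{thm:Serrano}, the fibre $F_1^{\overline{x}}$ is $e_{\overline{x}}Y_1^{\overline{x}}$ plus the Hirzebruch-Jung strings of the singular points lying on it. Here the multiplicity is $|H|=e_{\overline{x}}$, where $H$ is the stabilizer of a point over $\overline{x}$. Applying Proposition \ref{prop:  coeffcients_F_i} locally at each such singular point $p$ gives the coefficients $e_{\overline{x}}D^p_{i,\ell_p}/n_p$, which is the first formula. The orientation matters: $Y_1^{\overline{x}}$ meets the end $E_1^p$ of the string, and for $f_2$ the roles of $a_p$ and $a_p'$ are swapped.

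\textbf{Self-intersection.} Use $F_1^{\overline{x}}\cdot Y_1^{\overline{x}}=0$. Since $Y_1^{\overline{x}}$ meets only $E_1^p$ in each string, transversally in one point, this gives
\[
0=e_{\overline{x}}(Y_1^{\overline{x}})^2+\sum_p e_{\overline{x}}\frac{D^p_{1,\ell_p}}{n_p}.
\]
It remains to identify $D^p_{1,\ell_p}=D[b_1,\dots,b_{\ell_p}]=a_p$, which holds by definition since $n_p/a_p=[b_1,\dots,b_{\ell_p}]$. For $f_2$ the central component meets $E_{\ell_p}^p$, and the relevant quantity is $D[b_{\ell},\dots,b_1]$. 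Reversing a Hirzebruch-Jung continued fraction gives $n/a'$ with $aa'\equiv1 \pmod n$; this is the standard fact I would quote, or derive from the symmetry of $N$ together with the determinant identity $N[b_1..b_\ell]N[b_2..b_{\ell-1}]-N[b_1..b_{\ell-1}]N[b_2..b_\ell]=-1$ obtained from Lemma \ref{lem: sol_tridiagonal_system}. Integrality of $(Y_1^{\overline{x}})^2$ is automatic because $Y_1^{\overline{x}}$ is a curve on a smooth surface.

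\textbf{Genus.} By Theorem \ref{thm:Serrano}(iii), $Y_1^{\overline{x}}\cong C_2/H$, and the branch points of $C_2\to C_2/H$ correspond bijectively to the strings, with ramification index $n_p$. Riemann–Hurwitz for the $H$-cover $C_2\to C_2/H$, with $|H|=e_{\overline{x}}$, then gives the genus formula directly. A branch point with $n_p$ corresponds to a singular point because the stabilizer of $(x,y)$ is the stabilizer of $y$ in $H$, and it is nontrivial exactly at the branch points.

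\textbf{Intersection of central components.} This is the step I expect to need the most care. I would compute
\[
|G|/ (e_{\overline{x}} e_{\overline{y}}) = F_1^{\overline{x}}F_2^{\overline{y}}/(e_{\overline{x}}e_{\overline{y}}) = \Big(Y_1^{\overline{x}}+\sum\tfrac{D^p_{i,\ell_p}}{n_p}E^p_i\Big)\Big(Y_2^{\overline{y}}+\sum\tfrac{D^p_{i,1}}{n_p}E^p_i\Big),
\]
using $F_1F_2=|G|$. The cross terms are handled as follows.
\begin{itemize}
\item The term $Y_1^{\overline{x}}\cdot E^p_i$ is nonzero only when $p$ lies on both fibres and $i=1$. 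There it contributes $D^p_{1,1}/n_p=1/n_p$, since $D[b_1]=1$.
\item Symmetrically, $Y_2^{\overline{y}}\cdot E_{\ell_p}^p$ contributes $D^p_{\ell_p,\ell_p}/n_p=1/n_p$.
\item The string–string terms equal $\frac{1}{n_p^2}\,\mu^TA\lambda$. Here $A\lambda=-n_p v_{\ell}$ by the proof of Proposition \ref{prop:  coeffcients_F_i}, so this term is $-\mu_\ell/n_p=-D^p_{\ell,\ell}/n_p=-1/n_p$.
\end{itemize}
Summing, each such $p$ contributes $1/n_p+1/n_p-1/n_p=1/n_p$. This gives the formula
\[
Y_1^{\overline{x}}Y_2^{\overline{y}}=\frac{|G|}{e_{\overline{x}}e_{\overline{y}}}-\sum_p \frac{1}{n_p},
\]
with $p$ over the singular points on both fibres. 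One must check that singular points lying on only one of the two fibres contribute nothing to the cross terms; this holds because their strings are disjoint from the other fibre.
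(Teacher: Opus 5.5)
Your argument is correct. For the fibre decomposition and the genus formula you do exactly what the paper does: you apply Proposition~\ref{prop: coeffcients_F_i} at each singular point, and you use Riemann--Hurwitz for $C_2\to C_2/H$ via Theorem~\ref{thm:Serrano}. The other two steps take different routes.

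\textbf{Self-intersection.} The paper simply quotes Polizzi's formula (Prop.~2.8 of [Po10]). You instead derive it from $F_1^{\overline{x}}Y_1^{\overline{x}}=0$ together with $D[b_1,\dots,b_\ell]=a$ and $D[b_\ell,\dots,b_1]=a'$. This keeps the theorem inside the paper's own tools and explains why $a_p'$ appears for $f_2$. Your continuant identity does follow from Lemma~\ref{lem: sol_tridiagonal_system}: apply Jacobi's complementary-minor formula to the entries $(1,1)$, $(1,\ell)$, $(\ell,\ell)$ of $A^{-1}$, which gives $aa'-1=n\,N[b_2,\dots,b_{\ell-1}]$.

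\textbf{Intersection of central components.} The paper intersects $F_1^{\overline{x}}$ only with $Y_2^{\overline{y}}$. Every $E_j^q$ lies in a fibre of $f_1$, so $F_1E_j^q=0$ and hence $F_1^{\overline{x}}Y_2^{\overline{y}}=|G|/e_{\overline{y}}$. After that, only the terms $e_{\overline{x}}\frac{1}{n_p}E_{\ell_p}^pY_2^{\overline{y}}$ survive. You instead expand the whole product $F_1^{\overline{x}}F_2^{\overline{y}}$, which forces you to evaluate the string--string terms through $A\lambda=-n_pv_\ell$. Your bookkeeping is right. However, your first and third contributions ($+1/n_p$ and $-1/n_p$) together equal $\frac{1}{e_{\overline{x}}}F_1^{\overline{x}}\cdot\sum_j\frac{D_{j,1}^p}{n_p}E_j^p$, which is $0$ for the reason just given. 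So the paper's computation is yours with that cancellation taken in advance.
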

\begin{proof}
	The decomposition of the singular fibres in irreducible components 
	follows directly from Proposition \ref{prop: coeffcients_F_i} applied iteratively for any singular point of $X$ belonging to such fibre. The self-intersection of the central components is given by \cite[Prop. 2.8]{Po10}, while the genus formulae are a consequence of Theorem \ref{thm:Serrano}$(i)$ and $(iii)$. It only remains to compute the intersection between $Y_1^{\overline{x}}$ and $Y_2^{\overline{y}}$. Consider a singular point point $p$ of $X$ belonging to the fibre of $\overline{x}$. If $p$ is not contained in the fibre of $\overline{y}$, then $Y_2^{\overline{y}}$ would intersect none of the components of the Hirzebruch-Jung strings of $p$. Conversely, if $p$ is contained in the fibre of $\overline{y}$ we would have 
	$E_{\ell_p}^pY_2^{\overline{y}}=1$ and $E_{i}^pY_2^{\overline{y}}=0$ otherwise, $i=1,\dots, \ell_p-1$. Thus, decomposing $F_1^{\overline{x}}$ in irreducible components and intersecting  $F_1^{\overline{x}}$ with $Y_2^{\overline{y}}$ we obtain 
	\[
	\frac{\vert G\vert}{e_{\overline{y}}}=F_1^{\overline{x}}Y_2^{\overline{y}}=e_{\overline{x}}Y_1^{\overline{x}}Y_2^{\overline{y}}+\sum_{p\in \Sing(X)\cap f_1^{-1}(\overline{x})\cap f_2^{-1}(\overline{y})} e_{\overline{x}}\frac{1}{n_p}E_{\ell_p}^pY_2^{\overline{y}},
	\]
	from which we automatically deduce the formula for $Y_1^{\overline{x}}Y_2^{\overline{y}}$.
\end{proof}
\begin{definition}
	For any $j=1,2$, let $I_j$ be the set of points $\overline{z}\in C_j/G$ such that $\Sing(X)\cap f_j^{-1}(\overline{z})$ is nonempty. 
	We consider the standard simplices 
	\[
	\Delta_1:=\left\{(\lambda_{\overline{x}})_{\overline{x}\in I_1}\colon \sum\lambda_{\overline{x}}=1, \lambda_{\overline{x}}\geq 0\right\} \  \text{and} \ \ \Delta_2:=\left\{(\mu_{\overline{y}})_{\overline{y}\in I_2}\colon \sum\mu_{\overline{y}}=1, \mu_{\overline{y}}\geq 0\right\}.
	\]
%
\end{definition}
\begin{remark}
	We observe that for any singular point $\overline{(x,y)}\in X$, 
	then  $\overline{x}\in I_1$, $\overline{y}\in I_2$, and so 
 we can consider the corresponding variables $\lambda_{\overline{x}}$ and $\mu_{\overline{y}}$.
\end{remark}
\begin{proposition}\label{prop: K_S_as_A-B}
	Let \(\rho\colon S\to X=(C_1\times C_2)/G\) be a product-quotient surface. 
	Let us fix \((\lambda_{\overline{x}})_{\overline{x}\in I_1}\in \Delta_1\) and  \((\mu_{\overline{y}})_{\overline{y}\in I_2}\in \Delta_2\). 
	
	The numerical canonical class \(K_{S}\) can be expressed as the difference of two effective $\mathbb R$-divisors $A$ and $B$, \(K_S \equiv_{num} A - B\), both depending on the chosen pair $(\lambda_{\overline{x}})_{\overline{x}\in I_1}$ and $(\mu_{\overline{y}})_{\overline{y}\in I_2}$. More precisely, we have
	\begin{equation*}A := \sum_{\overline{x}\in I_1} \delta_{\overline{x}} \lambda_{\overline{x}} Y_1^{\overline{x}} + \sum_{\overline{y}\in I_2} \delta_{\overline{y}} \mu_{\overline{y}} Y_2^{\overline{y}} + \sum_{p\in \Sing(X)} \left(\sum_{i=1}^{\ell_p} \max\{\xi_{i,p}, 0\} E_{i}^p\right),
	\end{equation*}
	and
	\begin{equation*}
		B := \sum_{p\in \Sing(X)} \left(\sum_{i=1}^{\ell_p} \max\{-\xi_{i,p}, 0\} E_{i}^p\right).
	\end{equation*}
For each point \(p = \overline{(x,y)} \in \Sing(X)\), the coefficients \(\xi _{i,p}\) are linear functions of \(\lambda _{\overline{x}}\) and \(\mu _{\overline{y}}\) defined by:
\begin{equation}\label{eq: value_of_Xi_i,P}
	\xi _{i,p}(\lambda _{\overline{x}},\mu _{\overline{y}}):=\frac{D_{i, \ell_{p}}^p}{n_{p}}(\delta_{\overline{x}}\lambda _{\overline{x}}+1)+\frac{D_{i,1}^p}{n_{p}}(\delta_{\overline{y}}\mu _{\overline{y}}+1)-1,
\end{equation}
where $\delta_{\bar{x}}:=\Theta_1e_{\bar{x}}$, $\delta_{\bar{y}}:=\Theta_2e_{\bar{y}}$ and $\Theta_1,\Theta_2$ are those in Definition \ref{defn: Theta_i}. 
\end{proposition}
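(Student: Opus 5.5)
We plan to combine three ingredients: the pullback formula for $K_X$, the discrepancy formula of Proposition \ref{prop: formula_discrepancies}, and the fibre decompositions of Theorem \ref{thm: dec_Fibres_self_int_and_genus}.

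\textbf{Step 1: express $K_X$ numerically through fibres.} On $C_1\times C_2$ the canonical class is numerically $(2g(C_1)-2)\{\mathrm{pt}\}\times C_2+(2g(C_2)-2)C_1\times\{\mathrm{pt}\}$. The quotient map $C_1\times C_2\to X$ is étale in codimension one, so $K_{C_1\times C_2}$ is the pullback of $K_X$. Pushing forward and using Riemann--Hurwitz, $2g(C_j)-2=|G|\Theta_j$, we get
\[
K_X\equiv \Theta_1 F_1+\Theta_2 F_2
\]
as $\mathbb Q$-Cartier classes, where $F_j$ is the class of a general fibre of $f_j$. Since all fibres of $f_1$ are numerically equivalent, and $\sum_{\overline{x}\in I_1}\lambda_{\overline{x}}=1$, we may replace $F_1$ by the convex combination $\sum_{\overline{x}\in I_1}\lambda_{\overline{x}}F_1^{\overline{x}}$, and similarly $F_2$ by $\sum_{\overline{y}\in I_2}\mu_{\overline{y}}F_2^{\overline{y}}$. (If $I_j$ is empty, $X$ is smooth along the fibres and the claim is trivial.) Pulling back to $S$ gives $\rho^*K_X\equiv\Theta_1\sum\lambda_{\overline{x}}f_1^*(\overline{x})+\Theta_2\sum\mu_{\overline{y}}f_2^*(\overline{y})$. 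This uses that $\rho^*$ of the fibre of $X$ is the total fibre on $S$.

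\textbf{Step 2: expand using Theorem \ref{thm: dec_Fibres_self_int_and_genus}.} Writing $f_1^*(\overline{x})=e_{\overline{x}}Y_1^{\overline{x}}+\sum_p\sum_i e_{\overline{x}}\frac{D^p_{i,\ell_p}}{n_p}E_i^p$ yields the terms $\delta_{\overline{x}}\lambda_{\overline{x}}Y_1^{\overline{x}}$. Each string $E^p$ with $p=\overline{(x,y)}$ lies in exactly one fibre of $f_1$, namely over $\overline{x}$, and one of $f_2$, namely over $\overline{y}$. It therefore receives the coefficient
\[
\delta_{\overline{x}}\lambda_{\overline{x}}\frac{D^p_{i,\ell_p}}{n_p}+\delta_{\overline{y}}\mu_{\overline{y}}\frac{D^p_{i,1}}{n_p}.
\]
Then add $K_S-\rho^*K_X=\sum_p\sum_i a_i^pE_i^p$ with
\[
a_i^p=\frac{D^p_{i,\ell_p}+D^p_{i,1}}{n_p}-1
\]
from Proposition \ref{prop: formula_discrepancies}. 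The total coefficient of $E_i^p$ is then exactly $\xi_{i,p}(\lambda_{\overline{x}},\mu_{\overline{y}})$.

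\textbf{Step 3: split into positive and negative parts.} Write $\xi_{i,p}=\max\{\xi_{i,p},0\}-\max\{-\xi_{i,p},0\}$. The central components have coefficients $\delta\lambda\geq0$, since $\Theta_j>0$ for genus at least $2$ and $e>0$. Hence $A$ and $B$ as defined are effective and $K_S\equiv A-B$.

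\textbf{Main obstacle.} The only delicate point is the bookkeeping. One must check that the orientation conventions match: in $F_1$ the string is weighted by $D_{i,\ell_p}$, in $F_2$ by $D_{i,1}$, and these must agree with the labelling of the discrepancy formula for the same string. One must also check that the fibre coefficients are computed for the singular fibres over points of $I_j$, which is exactly where the strings live.
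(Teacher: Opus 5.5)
Your proposal is correct and follows essentially the same route as the paper. Both arguments write $\rho^*K_X\equiv\Theta_1F_1+\Theta_2F_2$, replace the general fibres by the convex combinations $\sum\lambda_{\overline{x}}F_1^{\overline{x}}$ and $\sum\mu_{\overline{y}}F_2^{\overline{y}}$, expand these via Theorem \ref{thm: dec_Fibres_self_int_and_genus}, add the discrepancies of Proposition \ref{prop: formula_discrepancies}, and split each $\xi_{i,p}$ into positive and negative parts. You also justify $K_X\equiv\Theta_1F_1+\Theta_2F_2$ via the quotient map being \'etale in codimension one, and you note that $\Theta_j>0$ makes the central-component coefficients of $A$ nonnegative; the paper leaves both points implicit.
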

\begin{proof}
	Note that $\vert G\vert K_X$ is Cartier and $\rho^*(\vert G\vert K_X)$ is numerically equivalent to $(2g(C_1)-2) F_1+(2g(C_2)-2)F_2$. Thus, applying Proposition \ref{prop: formula_discrepancies} recursively for any singular point of $X$ we obtain that $K_S$ is numerically equivalent to 
\begin{equation}\label{eq: K_S_for_prod_quo}
		K_S  \equiv \Theta_1F_1+\Theta_2F_2
		-\sum_{p\in \Sing(X)}\left(\sum_{i=1}^{\ell_p} \left(1-\frac{D_{i, \ell_p}^p+D_{i,1}^p}{n_p}\right)E_{i}^p\right).
\end{equation}
	Note that we replaced \((2g(C_i)-2)/|G|\) by \(\Theta _{i}\) from Riemann-Hurwitz formula applied to the covers \(\pi_i \colon C_i \to C_i/G\), for \(i=1,2\). 
	
	Now, for any fixed pair \((\lambda_{\overline{x}})_{\overline{x}\in I_1} \in \Delta_1\) and \((\mu_{\overline{y}})_{\overline{y}\in I_2} \in \Delta_2\), we can consider the following convex combinations of fibres up to numerical equivalence:
	\[F_{1}\equiv \sum _{\overline{x}\in I_{1}}\lambda _{\overline{x}}F_{1}^{\overline{x}}\quad \text{and}\quad F_{2}\equiv\sum _{\overline{y}\in I_{2}}\mu _{\overline{y}}F_{2}^{\overline{y}}.\]
	
	 Replacing $F_1^{\overline{x}}$ and $F_2^{\overline{y}}$ with their  decomposition given by Theorem \ref{thm: dec_Fibres_self_int_and_genus}, for any $\overline{x}\in I_1$ and $\overline{y}\in I_2$, we write 
	 \begin{equation}\label{eq: descrption_F_1_and_F_2}
	 	\begin{split}
	 		\Theta_1F_1+\Theta_2F_2 & \equiv\sum_{\overline{x}\in I_1}\delta_{\overline{x}}\lambda_{\overline{x}}Y_1^{\overline{x}}+\sum_{\overline{y}\in I_2}\delta_{\overline{y}}\mu_{\overline{y}}Y_2^{\overline{y}}+\\ & + \sum_{\overline{x}\in I_1}\lambda_{\overline{x}}\left(\sum_{p\in\Sing(X)\cap f_1^{-1}(\overline{x})}\left(\sum_{i=1}^{\ell_p}\delta_{\overline{x}}\frac{D_{i, \ell_p}^p}{n_p}E_{i}^p\right)\right)+ \\ & +
	 		\sum_{\overline{y}\in I_2}\mu_{\overline{y}}\left(\sum_{p\in\Sing(X)\cap f_2^{-1}(\overline{y})}\left(\sum_{i=1}^{\ell_p}\delta_{\overline{y}}\frac{D_{i, 1}^p}{n_p}E_{i}^p\right)\right).
	 	\end{split}
	 \end{equation}
 However, the last two terms of the right side can be rearranged as 
\begin{equation}\label{eq: rearrangement}
    \sum_{p=\overline{(x,y)}\in \Sing(X)} \left(\sum_{i=1}^{\ell_p}\left(\delta_{\overline{x}}\frac{D_{i, \ell_p}^p}{n_p}\lambda_{\overline{x}}+\delta_{\overline{y}}\frac{D_{i,1}^p}{n_p}\mu_{\overline{y}}\right)E_{i}^p\right), 
\end{equation}
Finally, replacing $\Theta_1F_1+\Theta_2F_2$ into the numerical canonical class $K_S$ in \eqref{eq: K_S_for_prod_quo} with the expression from Equation \eqref{eq: descrption_F_1_and_F_2} and using \eqref{eq: rearrangement}, we obtain 
\begin{equation}\label{eq: K_S_in_function_except_components}
	K_S\equiv \sum_{\overline{x}\in I_1}\delta_{\overline{x}}\lambda_{\overline{x}}Y_1^{\overline{x}}+\sum_{\overline{y}\in I_2}\delta_{\overline{y}}\mu_{\overline{y}}Y_2^{\overline{y}}  +\sum_{p\in\Sing(X)}\left(\sum_{i=1}^{\ell_p}\xi_{i,p}E_{i}^p\right).
\end{equation}
To write $K_{S}$ as the difference of the effective divisors $A$ and $B$, we must determine which exceptional components $E_{i}^p$ of the resolution of $X$ appear with a positive coefficient in Formula \eqref{eq: K_S_in_function_except_components}. By writing each $\xi_{i,p}$ as the difference $\xi_{i,p} = \max\{\xi_{i,p}, 0\} - \max\{-\xi_{i,p}, 0\}$, it follows that $E_{i}^p$ appears in $A$ with coefficient $\max\{\xi_{i,p}, 0\}$ and in $B$ with coefficient $\max\{-\xi_{i,p}, 0\}$.
\end{proof}
We can finally state and prove the second main theorem of the paper. 
\begin{theorem}\label{thm: min_Function_localization_-1_curves}
	Let $S$ be a product-quotient surface of general type with quotient model $X=(C_1\times C_2)/G$. Let $F\colon \Delta_1\times \Delta_2\to \mathbb  R_{\geq 0}$ be the continuous piecewise linear function 
	\[
		F((\lambda_{\overline{x}})_{\overline{x}\in I_1}, (\mu_{\overline{y}})_{\overline{y}\in I_2}):= \sum_{p\in \Sing(X)} \left( \sum_{i=1}^{\ell_p}\max\left\{-\xi_{i,p}, 0\right\}\max\left\{1, b_{i}^p-3\right\}\right),
\]
  where, for each $p=\overline{(x,y)}\in \Sing(X)$, the terms $\xi_{i,p}$ are the linear functions of $\lambda_{\overline{x}}$ and $\mu_{\overline{y}}$ defined in \eqref{eq: value_of_Xi_i,P}.	
  
  Let us suppose that it there exists a pair $((\lambda_{\overline{x}})_{\overline{x}\in I_1}, (\mu_{\overline{y}})_{\overline{y}\in I_2})\in\Delta_1\times \Delta_2$ such that 
	\[
	  F((\lambda_{\overline{x}})_{\overline{x}\in I_1}, (\mu_{\overline{y}})_{\overline{y}\in I_2}) < 1.
	\]
	Then all curves contracted to the minimal model of $S$ are contained in reducible fibres of \(f_1\colon S\to C_1/G\) and \(f_2\colon S\to C_2/G\).
\end{theorem}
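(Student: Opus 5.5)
The plan is to reduce the statement to Theorem \ref{thm: minimality_criterion}, using the decomposition $K_S\equiv A-B$ from Proposition \ref{prop: K_S_as_A-B} at the given point of $\Delta_1\times\Delta_2$. Fix a pair $((\lambda_{\overline{x}}), (\mu_{\overline{y}}))\in\Delta_1\times\Delta_2$ with $F<1$, and let $A$ and $B$ be the effective $\mathbb R$-divisors of Proposition \ref{prop: K_S_as_A-B} for this pair. The components of $B$ are only exceptional curves $E_i^p$, each appearing with coefficient $\max\{-\xi_{i,p},0\}$. These curves are pairwise distinct for distinct pairs $(i,p)$, so $B=\sum\lambda_iC_i$ is already written with distinct irreducible curves, as Theorem \ref{thm: minimality_criterion} requires.

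Next I compute $\tau(B)$. Each $E_i^p$ is a smooth rational curve, so $p_a(E_i^p)=0$, and $(E_i^p)^2=-b_i^p$. Hence
\[
2p_a(E_i^p)-(E_i^p)^2-3=b_i^p-3,
\]
and therefore $\max\{1,2p_a-C^2-3\}=\max\{1,b_i^p-3\}$. Summing over all components gives $\tau(B)=F((\lambda_{\overline{x}}),(\mu_{\overline{y}}))<1$. Theorem \ref{thm: minimality_criterion} then says that every curve contracted by $S\to S_{\min}$ lies in the support of $A$.

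It remains to locate the support of $A$. By Proposition \ref{prop: K_S_as_A-B}, this support consists of:
\begin{itemize}
\item the central components $Y_1^{\overline{x}}$ for $\overline{x}\in I_1$ and $Y_2^{\overline{y}}$ for $\overline{y}\in I_2$;
\item some exceptional curves $E_i^p$.
\end{itemize}
Every $E_i^p$ lies in the fibres $F_1^{\overline{x}}$ and $F_2^{\overline{y}}$ through $p=\overline{(x,y)}$. These fibres are reducible: each contains the central component together with at least one Hirzebruch--Jung string, by Theorem \ref{thm:Serrano}. For $\overline{x}\in I_1$, the fibre $F_1^{\overline{x}}$ contains a singular point of $X$ by definition of $I_1$, so it contains a string besides $Y_1^{\overline{x}}$ and is reducible; the same holds for $Y_2^{\overline{y}}$ with $\overline{y}\in I_2$. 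So every component of $A$ lies in a reducible fibre of $f_1$ or $f_2$, which gives the conclusion.

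The main point needing care is whether $\tau(B)$ really coincides with $F$. The coefficients of $B$ must be read off with their multiplicities. Each $E_i^p$ belongs to exactly one singular point $p$, so the coefficient $\xi_{i,p}$ from \eqref{eq: K_S_in_function_except_components} is already the full coefficient of that curve. There is therefore no double counting, and the identification $\tau(B)=F$ is exact.

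A second point is the degenerate case where $\delta_{\overline{x}}\lambda_{\overline{x}}=0$ or $\delta_{\overline{y}}\mu_{\overline{y}}=0$. Then the corresponding central component simply drops out of $A$, which only shrinks the support, so the conclusion still holds. Also, $\Theta_j>0$ because $g(C_j)\ge2$, so every $\delta$ is positive and $A$ is indeed effective, as Theorem \ref{thm: minimality_criterion} requires.
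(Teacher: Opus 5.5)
Your proposal is correct and follows the paper's proof: you fix a point where $F<1$, take the decomposition $K_S\equiv A-B$ of Proposition~\ref{prop: K_S_as_A-B}, identify $F$ with $\tau(B)$ using $p_a(E_i^p)=0$ and $(E_i^p)^2=-b_i^p$, apply Theorem~\ref{thm: minimality_criterion}, and note that the support of $A$ consists only of central components over $I_1$ and $I_2$ and exceptional curves, all of which lie in reducible fibres. Your extra checks (that $\Theta_j>0$ so $A$ is effective, that there is no double counting in $B$, and that the fibres over points of $I_j$ are reducible) make explicit what the paper leaves implicit.
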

\begin{proof}
	Fixing \((\lambda_{\overline{x}})_{\overline{x}\in I_1}\in\Delta_1\) and \((\mu_{\overline{y}})_{\overline{y}\in I_2}\in \Delta_2\), we can write \(K_S \equiv A-B\), according to  Proposition \ref{prop: K_S_as_A-B}. We observe that, for any choice of \((\lambda_{\overline{x}})\) and \((\mu_{\overline{y}})\), then the real value \(F((\lambda_{\overline{x}})_{\overline{x}\in I_1}, (\mu_{\overline{y}})_{\overline{y}\in I_2})\) coincides with the left-hand side $\tau(B)$ of Theorem \ref{thm: minimality_criterion}. By assumption, there exists a pair \(((\lambda_{\overline{x}}), (\mu_{\overline{y}})) \in \Delta_1 \times \Delta_2\) such that \(F((\lambda_{\overline{x}})_{\overline{x}\in I_1}, (\mu_{\overline{y}})_{\overline{y}\in I_2}) < 1\); thus, Theorem \ref{thm: minimality_criterion} applies. 
	However, $A$ consists only of some central components of \(S\) and exceptional components of the resolution of $X$, which are contained in reducible fibres of the two natural fibrations of $S$.
\end{proof}
\begin{remark}\label{rem: the_sum_is_not_affect_by_rational_double_points}
	The function \(F\) is a sum over the singular points of \(X\). However, singular points of \(X\) that are canonical singularities do not affect the sum; thus, the summation can be taken over the subset of singular points that are not canonical. Indeed, if $p=\overline{(x,y)}\in \Sing(X)$ is a canonical singularity of type \(\frac{1}{n}(1,n-1)\), then:
	\[
	\begin{split}
		\xi_{i,p}& =\frac{n-1-(i-1)}{n}(\delta_{\overline{x}}\lambda_{\overline{x}}+1)+\frac{i}{n}(\delta_{\overline{y}}\mu_{\overline{y}}+1)-1 \\ 
		&= \frac{n-i}{n}\delta_{\overline{x}}\lambda_{\overline{x}}+\frac{i}{n}(\delta_{\overline{y}}\mu_{\overline{y}})\geq 0.
	\end{split}
	\]
	Hence we would have $\max\{-\xi_{i,p},0\}=0$ independently by the choice of $\lambda_{\overline{x}}$ and $\mu_{\bar{y}}$. In other words, the contribution of the point $p$ to $F$ is zero. 
\end{remark}
\begin{remark}
	Assume that all the singularities of $X$ lie over one point of $C_1/G$ and over one point of $C_2/G$; in other words $I_1$ and $I_2$ consists of only one point, and so $\Delta_1$ and $\Delta_2$ are the standard  $0$-simplices. Hence 
	$F$ is a constant function, and for any $p=\overline{(x,y)}\in \Sing(X)$ the terms $\xi_{i,p}$ appearing in $F$ simply become: 
	\[
	\xi _{i,p}=\frac{D_{i, \ell_{p}}^p}{n_{p}}(\delta_{\overline{x}}+1)+\frac{D_{i,1}^p}{n_{p}}(\delta_{\overline{y}}+1)-1. 
	\]
	Thus, when the constant $F$ is less than one, then we automatically obtain that all curves contracted to the minimal model of $S$ are contained in reducible fibres of the two natural fibrations of $S$. 
\end{remark}
We provide an easy example to show how Theorem \ref{thm: min_Function_localization_-1_curves} works.
\begin{example}\label{exmp: example}
	Consider a product-quotient surface belonging to the family no.526 in \cite[Table 21]{fedeLincei}, see Table \ref{table: example_526}.
	\begin{table}[H]
		\centering
		\hspace*{-1.8cm}
		\renewcommand{\arraystretch}{1.2}
		\begin{tabular}{| c| c| c|c| c| c|  c| }
			\hline
			$no.$ & $G(d,n)$ & Sing($X$) &	 $t_1$ & $t_2$  & $\Theta_1$ & $\Theta_2$ \\
			\hline
			\hline
			$ 526 $ &$\langle 120, 35\rangle$ & \makecell[c]{$1/5, 1/3, 2/3,4/5$}  & $ 2, 6, 10 $ & $ 2^2, 3, 5 $ &   $\frac{7}{30}$ & $\frac{7}{15}$\\
		\hline
				\end{tabular}
		\caption{The information of the family of product-quotient surfaces no.526 with $p_g=3$, $q=0$.} \label{table: example_526}
	\end{table}
\vspace{-0.8cm}

   It is easy to verify that there are two points of $C_1/G$ containing some singular point of $X$; more precisely the point with ramification index $10$ contains two singular points of type  $\frac{1}{5}$ and $\frac{4}{5}$, while the point with ramification index $6$ contains two singular points of type $\frac{1}{3}$ and $\frac{2}{3}$. Thus we have two variables $(\lambda_1, \lambda_2)\in \Delta_1$.
   
   Similarly, we have two points of $C_2/G$ containing some singular point of $X$, that of ramification index $5$ containing $\frac{1}{5}$ and $\frac{4}{5}$, and that with ramification index $3$, containing $\frac{1}{3}$ and $\frac{2}{3}$. Thus, we also have in this case two variables $(\mu_1, \mu_2)\in \Delta_2$. 
   The function $F$ is then
   \[
      F(\lambda_1,\lambda_2,\mu_1,\mu_2)=2\max\left\{\frac{3}{5}-\frac{7}{15}(\lambda_1+\mu_1),0\right\}+\max\left\{\frac{1}{3}-\frac{7}{15}(\lambda_2+\mu_2),0\right\}
   \] 
   Choosing $(\lambda_1,\lambda_2)=(1,0)$ and $(\mu_1,\mu_2)=(\frac{2}{7},\frac{5}{7})$ we obtain $F=0$.  
   Then Theorem \ref{thm: min_Function_localization_-1_curves} holds and so any $(-1)$-curve must be one of the central components of $S$. However, one can apply Theorem \ref{thm: dec_Fibres_self_int_and_genus} and compute that the genus of the central components of the fibres over the points of ramification $10$ and $5$ have both genus $3$, while those of the points with ramification index $6$ and $3$ have both genus $5$. Thus, $S$ is a minimal surface of general type. 
\end{example}
\begin{remark}\label{rem: F_only_dep_from_num_data}
	As shown by the example, the function \(F\) depends only on the combinatorial structure of $S$, namely on the signatures \(t_{1}\) and \(t_{2}\) and on the basket of singularities of \(X\), by accounting for how the singularities are partitioned among the fibres of \(f_1\colon X \to C_1/G\) and \(f_2\colon X\to C_2/G\). 
\end{remark}
We conclude the section with some consequences of Theorem \ref{thm: min_Function_localization_-1_curves}, namely Proposition \ref{prop: loc_-1_curves_from_signatures} and Corollary \ref{cor: improvement_Lemma_Roberto}. Under the assumption that all non-canonical singularities of $X$ lie over a single point \(\overline{x}\in C_1/G\) and over a single point \(\overline{y}\in C_2/G\), this result allows us to to find the $(-1)$-curves on a product-quotient surface of general type directly from the signatures \(t_{j}\) of the Galois coverings \(\pi_j\colon C_j\to C_j/G\), without involving the function $F$.

We denote by $\widehat{t_1}$ the signature obtained from $t_1$ by removing the ramification index $e_{\overline{x}}$ of the point $\overline{x}\in C_1/G$. Similarly, $\widehat{t_2}$ is obtained from $t_2$ by removing $e_{\overline{y}}$. For instance, if we have a pair of signatures $t_1=(g_1\vert 2,3,5)$ and $t_2=(g_2\vert 2,6,6,7)$, and if all the non-canonical singularities lie over the two points of ramification index $3$ and $6$, then $\widehat{t_1}=(g_1\vert 2,5)$ and $\widehat{t_2}=(g_2\vert 2,6,7)$. Finally, we define 
\[
\widehat{\Theta_1}:=\Theta_1+\frac{1}{e_{\overline{x}}}, \qquad \makebox{and} \qquad \widehat{\Theta_2}:=\Theta_2+\frac{1}{e_{\overline{y}}}. 
\]
Note that $\widehat{\Theta_j}=\Theta(\widehat{t_j})+1$, for  $j=1,2$. 
\begin{lemma}\label{lem: technical_lemma_-1_curves_from_signatures}
	Let $S$ be a product-quotient surface of general type. Let us assume that all the non-canonical singularities of the quotient model $X=(C_1\times C_2)/G$ lie over a single point $\overline{x}\in C_1/G$ and over a single point $\overline{y}\in C_2/G$. If 
	\[
	   \widehat{\Theta_1}+\widehat{\Theta_2}\geq 1,
	\]
	then all curves contracted to the minimal model of $S$ are contained in reducible fibres of $f_1\colon S\to C_1/G$ and $f_2\colon S\to C_2/G$. 
\end{lemma}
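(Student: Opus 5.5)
The plan is to apply Theorem \ref{thm: min_Function_localization_-1_curves} at one explicit point of $\Delta_1\times\Delta_2$ and show that $F$ vanishes there. By Remark \ref{rem: the_sum_is_not_affect_by_rational_double_points}, canonical singularities contribute nothing to $F$ whatever point we pick. Hence only the non-canonical singular points $p$ matter, and by hypothesis each of them has the form $p=\overline{(x,y)}$ with the fixed $\overline{x}$ and $\overline{y}$. The natural choice is to concentrate the weights at these two points: take $\lambda_{\overline{x}}=1$ and $\lambda_{\overline{x}'}=0$ for every other $\overline{x}'\in I_1$, and similarly $\mu_{\overline{y}}=1$ and all other $\mu$ equal to zero. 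This is a vertex of $\Delta_1\times\Delta_2$, and $\overline{x}\in I_1$, $\overline{y}\in I_2$ because they carry singular points.

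At this point, for every non-canonical $p$ formula \eqref{eq: value_of_Xi_i,P} gives
\[
\xi_{i,p}=\frac{D^p_{i,\ell_p}}{n_p}(\delta_{\overline{x}}+1)+\frac{D^p_{i,1}}{n_p}(\delta_{\overline{y}}+1)-1 .
\]
Since $\delta_{\overline{x}}=\Theta_1e_{\overline{x}}$, we have $\delta_{\overline{x}}+1=e_{\overline{x}}\widehat{\Theta_1}$, and likewise $\delta_{\overline{y}}+1=e_{\overline{y}}\widehat{\Theta_2}$. Therefore
\[
\xi_{i,p}=\frac{e_{\overline{x}}D^p_{i,\ell_p}}{n_p}\,\widehat{\Theta_1}+\frac{e_{\overline{y}}D^p_{i,1}}{n_p}\,\widehat{\Theta_2}-1 .
\]

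The key inequality is that both coefficients $e_{\overline{x}}D^p_{i,\ell_p}/n_p$ and $e_{\overline{y}}D^p_{i,1}/n_p$ are at least $1$. First, the denominators $D$ are positive integers, so $D\geq 1$. Second, $n_p$ divides both $e_{\overline{x}}$ and $e_{\overline{y}}$: the stabilizer of a point of $C_1\times C_2$ lying over $p$ is the intersection of the stabilizers of its two coordinates, so its order $n_p$ divides each of them, hence $n_p\leq e_{\overline{x}}$ and $n_p\leq e_{\overline{y}}$. Alternatively, this follows from Theorem \ref{thm:Serrano}(i), because the fibre multiplicity $e$ is divisible by the ramification index $n_p$ of the corresponding branch point of $C_1\to C_1/H$. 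This step also needs $\widehat{\Theta_j}>0$, so that the lower bounds on the coefficients can be used. This holds because $\Theta_j=(2g(C_j)-2)/|G|>0$ by Riemann--Hurwitz, since $g(C_j)\geq 2$, and therefore $\widehat{\Theta_j}=\Theta_j+1/e>0$.

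Combining these facts gives $\xi_{i,p}\geq \widehat{\Theta_1}+\widehat{\Theta_2}-1\geq 0$ for every $i$ and every non-canonical $p$. Hence every term $\max\{-\xi_{i,p},0\}$ vanishes and $F=0<1$ at the chosen point, so Theorem \ref{thm: min_Function_localization_-1_curves} gives the conclusion. The only step I expect to need care is the divisibility $n_p\mid e_{\overline{x}}$ and $n_p\mid e_{\overline{y}}$, together with the positivity of $\widehat{\Theta_j}$; the rest is direct substitution.
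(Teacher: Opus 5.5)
Your proof is correct and follows the same route as the paper. You evaluate $F$ at the vertex $\lambda_{\overline{x}}=\mu_{\overline{y}}=1$, discard canonical points, and bound $\xi_{i,p}\geq \widehat{\Theta_1}+\widehat{\Theta_2}-1\geq 0$ using $D\geq 1$, $n_p\mid e_{\overline{x}}$, $n_p\mid e_{\overline{y}}$ and $\Theta_j>0$. Your rewriting $\delta_{\overline{x}}+1=e_{\overline{x}}\widehat{\Theta_1}$ packages the paper's chain of inequalities more compactly, and you state explicitly the positivity of $\Theta_j$, which the paper uses tacitly.
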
 
\begin{proof}
	From Remark \ref{rem: the_sum_is_not_affect_by_rational_double_points}, then $F$ is a function depending only on the two variables $\lambda_{\overline{x}}$ and $\mu_{\overline{y}}$.  We want to compute $F$ at $\lambda_{\overline{x}}=\mu_{\overline{y}}=1$ and prove that $F(1,1)=0$. 
	
	To do this, it is sufficient to study $\xi_{i,p}(1,1)$ for any singular point $p=\overline{(x,g\cdot y)}\in X$ lying over $\overline{x}$ and $\overline{y}$. Remembering that $n_p$ divides both $e_{\overline{x}}$ and $e_{\overline{y}}$, then
	\[
	\begin{split}
	  \xi_{i,p}(1,1) &=\frac{D_{i, \ell_{p}}^p}{n_p}(\Theta _{1}e_{\overline{x}}+1)+\frac{D_{i, 1}^p}{n_p}(\Theta _{2}e_{\overline{y}}+1)-1 \\ 
	  &\geq \Theta_1\frac{e_{\overline{x}}}{n_p}+\Theta_2\frac{e_{\overline{y}}}{n_p}+\frac{2}{n_p}-1\\
	  &\geq \Theta_1+\Theta_2+\frac{2}{n_p}-1 \\
	  & \geq \widehat{\Theta_1}+\widehat{\Theta_2}-1\\
	  & \geq 0.
  \end{split}
	\]
	Thus, $\max\{-\xi_{i,p}(1,1), 0\}=0$ for any singular point $p$ lying over $\overline{x}$ and $\overline{y}$. Hence $F(1,1)=0$ and Theorem \ref{thm: min_Function_localization_-1_curves} applies. 
\end{proof}
We consider the lexicographic order $\geq$ on the set of signatures with the same fixed genus $g\geq 0$. 
The length of a signature $(g\vert m_1\dots, m_r)$ is $r$. 
\begin{proposition}\label{prop: loc_-1_curves_from_signatures}
	Let $S$ be a product-quotient surface of general type such that all the non-canonical singularities of the quotient model $X=(C_1\times C_2)/G$ lie over a single point $\overline{x}\in C_1/G$ and over a single point $\overline{y}\in C_2/G$. Furthermore, let us assume that the pair of signatures $t_1$ and $t_2$ of $S$ satisfies one of the following conditions: 
	\begin{enumerate}
		\item one between $t_1$ or $t_2$ has at least length $5$; or 
		\item $t_1$ and $t_2$  have both length $4$; or 
		\item $t_1$ has length $4$ and $t_2$ has length $3$  and they satisfies one among 
		\begin{itemize}
			\item[$(a)$] $\widehat{t_1}\geq (g_1\vert 2,3,6)$;
			\item[$(b)$] $(g_1\vert 2,2,6)\leq \widehat{t_1}<(g_1\vert 2,3,6)$, $\widehat{t_2}\geq (g_2\vert 2,3)$; 
			\item[$(c)$] $ \widehat{t_1}=(g_1\vert 2,2,5),(g_1\vert 2,2,4)$, $\widehat{t_2}\geq (g_2\vert 2,4)$; 
			\item[$(d)$] $ \widehat{t_1}=(g_1\vert 2,2,3)$, $\widehat{t_2}\geq (g_2\vert 2,6)$; 
			\item[$(e)$] $\widehat{t_1}=(g_1\vert 2,2,2)$, $\widehat{t_2}\geq (g_2\vert 3,6)$;
		\end{itemize}
	or 
	\item $t_1$ and $t_2$ have both length $3$ and they satisfies $\frac{1}{a_1}+\frac{1}{b_1}+\frac{1}{a_2}+\frac{1}{b_2}\leq 1$, where $\widehat{t_1}=(g_1\vert a_1,b_1)$ and $\widehat{t_2}=(g_2\vert a_2,b_2)$. 
	\end{enumerate}
Then all curves contracted to the minimal model of $S$ are contained in reducible fibres of $f_1\colon S\to C_1/G$ and $f_2\colon S\to C_2/G$. 
\end{proposition}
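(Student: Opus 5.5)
The plan is to reduce everything to Lemma \ref{lem: technical_lemma_-1_curves_from_signatures}. It then suffices to check that each listed condition on the pair $(t_1,t_2)$ implies $\widehat{\Theta_1}+\widehat{\Theta_2}\geq 1$, where $\widehat{\Theta_j}=\Theta(\widehat{t_j})+1$. Writing $\widehat{t_j}=(g_j\vert m_1,\dots,m_{r_j-1})$ with $r_j$ the length of $t_j$, we have
\[
\widehat{\Theta_j}=2g_j-1+\sum_{k=1}^{r_j-1}\left(1-\frac{1}{m_k}\right).
\]
If $g_j\geq 1$ then $\widehat{\Theta_j}\geq 1+\frac{r_j-1}{2}\geq 1$ already, since every $m_k\geq 2$. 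So the only case needing work is $g_1=g_2=0$, and the lexicographic conditions should be read with $g_j=0$. In that case $\widehat{\Theta_j}=-1+\sum_k(1-1/m_k)$, and each summand is at least $1/2$.

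The cases are handled as follows.

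\textbf{Case (1).} If, say, $t_1$ has length at least $5$, then $\widehat{t_1}$ has at least $4$ entries and $\widehat{\Theta_1}\geq -1+4\cdot\frac12=1$. The other term must also be nonnegative. Here $t_2$ has length at least $3$ in genus $0$ (a Galois cover of $\mathbb P^1$ by a curve of genus $\geq 2$ needs at least three branch points), so $\widehat{t_2}$ has at least two entries and $\widehat{\Theta_2}\geq -1+1=0$.

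\textbf{Case (2).} Each $\widehat{t_j}$ has three entries, so $\widehat{\Theta_j}\geq\frac12$ and the sum is at least $1$.

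\textbf{Case (4).} Here $\widehat{\Theta_1}+\widehat{\Theta_2}=2-\left(\frac{1}{a_1}+\frac{1}{b_1}+\frac{1}{a_2}+\frac{1}{b_2}\right)\geq 1$, which is exactly the stated inequality.

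\textbf{Case (3).} This is the main bookkeeping step. We have $\widehat{\Theta_1}=2-\sum_{k=1}^3\frac{1}{m_k}$ and $\widehat{\Theta_2}=1-\frac{1}{a_2}-\frac{1}{b_2}$, so we need
\[
\sum_{k=1}^3\frac{1}{m_k}+\frac{1}{a_2}+\frac{1}{b_2}\leq 2.
\]
The key observation is monotonicity: $\sum 1/m_k$ decreases when any entry increases. So in each subcase it suffices to check the boundary (lexicographically smallest) signature, after verifying that lexicographic order on the sorted tuples controls the sum in the way the case split needs.
\begin{itemize}
\item[(a)] If $\widehat{t_1}\geq(2,3,6)$, then either $m_1\geq 3$, giving sum $\leq 1$, or $m_1=2$ and $(m_2,m_3)\geq(3,6)$ lexicographically. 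In the latter case the sum is at most $\frac12+\frac13+\frac16=1$ or at most $\frac12+\frac14+\frac14=1$, and $\widehat{\Theta_2}\geq 0$.
\item[(b)] If $\widehat{t_1}=(2,2,m)$ with $m\geq 6$, or $(2,3,3),(2,3,4),(2,3,5)$, then $\sum 1/m_k\leq\frac{7}{6}$. If $\widehat{t_2}\geq(2,3)$, then $\frac{1}{a_2}+\frac{1}{b_2}\leq\frac56$, and the total is at most $2$.
\item[(c)] $\sum 1/m_k\leq\frac54$ and $\frac{1}{a_2}+\frac{1}{b_2}\leq\frac34$.
\item[(d)] $\frac43+\frac23=2$.
\item[(e)] $\frac32+\frac12=2$.
\end{itemize}
In each subcase one checks that the lexicographic lower bound on $\widehat{t_2}$ indeed forces $\frac{1}{a_2}+\frac{1}{b_2}$ below the stated threshold. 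For example, $\widehat{t_2}\geq(3,6)$ means either $a_2\geq 4$, giving $\leq\frac12$, or $a_2=3$ and $b_2\geq 6$.

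The main obstacle is this verification that the lexicographic ordering genuinely bounds the reciprocal sums. Lexicographic order is not monotone for the sum in general: $(2,2,100)$ is lexicographically smaller than $(2,3,3)$ but has a smaller reciprocal sum. So I will handle each subcase by splitting on the first differing entry, bounding the tail by the worst possible sorted entries. Once all cases give $\widehat{\Theta_1}+\widehat{\Theta_2}\geq 1$, Lemma \ref{lem: technical_lemma_-1_curves_from_signatures} yields the conclusion.
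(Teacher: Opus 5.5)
Your proposal is correct and takes the same approach as the paper: you reduce to Lemma \ref{lem: technical_lemma_-1_curves_from_signatures} by verifying $\widehat{\Theta_1}+\widehat{\Theta_2}\geq 1$ in each case. You also carry out the subcase checks in (3) and (4) that the paper leaves as ``easily verified,'' and you handle the non-monotonicity of the lexicographic order correctly. The one step you leave implicit, that the other summand is nonnegative when some $g_j\geq 1$, is immediate: $\widehat{\Theta_1}=\Theta_1+1/e_{\overline{x}}$ and $\widehat{\Theta_2}=\Theta_2+1/e_{\overline{y}}$ are both positive, because $\Theta_j=(2g(C_j)-2)/|G|>0$.
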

\begin{proof}
	In the first case, assuming that $t_1$ has length at least $5$, one would have 
	\[
	\widehat{\Theta_1}+\widehat{\Theta_2}\geq \widehat{\Theta_1}\geq -2+4\left(1-\frac{1}{2}\right)+1\geq 1. 
	\]
	Instead, in the second case 
	\[
	\widehat{\Theta_1}+\widehat{\Theta_2}\geq 2\left(-2+3\left(1-\frac{1}{2}\right)+1\right)\geq 1. 
	\]
	One can easily verify that also the other remain cases implies $\widehat{\Theta_1}+\widehat{\Theta_2}\geq 1$. Thus, Lemma \ref{lem: technical_lemma_-1_curves_from_signatures} applies and the thesis follows. 
\end{proof}
We conclude with an improvement of \cite[Prop. 4.7(2)]{BP12}. 
\begin{corollary}\label{cor: improvement_Lemma_Roberto}
	Let $S$ be a product-quotient surface of general type such that the quotient model $X=(C_1\times C_2)/G$ has at most one singularity of type $\frac{1}{n}(1,a)$, and canonical singularities. Let us suppose that the signatures $t_1$ and $t_2$ of $S$ fall in one of the cases of Proposition \ref{prop: loc_-1_curves_from_signatures}. Then $S$ is minimal.
\end{corollary}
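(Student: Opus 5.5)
The plan is to combine Proposition \ref{prop: loc_-1_curves_from_signatures} with a direct check that none of the curves in reducible fibres can be contracted to the minimal model. Since $X$ has only one non-canonical singularity $p$ of type $\frac{1}{n}(1,a)$, and the remaining singularities are canonical, all non-canonical singularities lie over the single point $\overline{x}=f_1(p)$ and the single point $\overline{y}=f_2(p)$. The hypotheses of Proposition \ref{prop: loc_-1_curves_from_signatures} are therefore met. It follows that every curve contracted by $S\to S_{\min}$ lies in a reducible fibre of $f_1$ or of $f_2$. More precisely, by the proof of Theorem \ref{thm: min_Function_localization_-1_curves}, each such curve lies in the support of $A$, so it is either a central component $Y_1^{\overline{x}}$ or $Y_2^{\overline{y}}$, or an exceptional curve $E_i^q$ of the resolution.

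It then suffices to show that $S$ has no $(-1)$-curve. A curve contracted to the minimal model exists only if a $(-1)$-curve exists, and every $(-1)$-curve is itself such a contracted curve. The exceptional curves $E_i^q$ all have self-intersection $\le -2$, since the resolution is minimal, so they are not $(-1)$-curves. The candidates reduce to central components $Y$ of reducible fibres, i.e.\ fibres containing some singular point.

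Next I examine a central component $Y$ over a point $\overline{z}$ carrying singular points. Consider first a fibre containing only canonical singularities, of types $\frac{1}{m}(1,m-1)$. By Theorem \ref{thm: dec_Fibres_self_int_and_genus}, $Y^2=-\sum (m-1)/m$, and this sum must be an integer. Theorem \ref{thm:Serrano}(i) gives at least two strings, so $Y^2\le -1$, with equality exactly when there are two $A_1$-points. I would settle the $Y$ with $Y^2=-1$ by the genus: a $(-1)$-curve is rational, with $K_SY=-1$. Alternatively, and more simply, I would use Proposition \ref{prop: diseg_intersezione}: a rational $(-1)$-curve $Y$ meets the string $E^q$ transversally in one point, and the end curve it meets is a smooth rational curve $E$ with $E^2=-2$. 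This is allowed, so it gives no contradiction, and I would need the genus computation instead. So I would compute $K_S Y$ directly from \eqref{eq: K_S_in_function_except_components}, or use adjunction, $K_SY=2g(Y)-2-Y^2$. Here $Y$ is a $(-1)$-curve iff $g(Y)=0$ and $Y^2=-1$. The genus formula of Theorem \ref{thm: dec_Fibres_self_int_and_genus} gives $2g(C)-2=e(2g(Y)-2+\sum(1-1/n_q))$. With $g(Y)=0$, I would rule this out by noting that $Y\cong C_j/H$ and checking, via $K_S\equiv A-B$ with $B$ supported only on the string of $p$, that $K_SY\ge 0$. Concretely, I would evaluate $K_SY=AY-BY$ at the point where $F=0$, i.e.\ $B=0$ as in the proof of Lemma \ref{lem: technical_lemma_-1_curves_from_signatures}. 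Then $K_S\equiv A$ with $A$ effective. For $Y$ a component of $A$, I would write $AY=\delta\lambda Y^2+(\text{nonnegative terms})$ and bound it below by $0$ using the inequality $\widehat{\Theta_1}+\widehat{\Theta_2}\ge 1$.

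The main obstacle is this last step: showing that no central component of a reducible fibre is a $(-1)$-curve, especially the fibres over $\overline{x}$ and $\overline{y}$, where $p$ itself sits. My first attempt is the direct computation of $K_SY$ from \eqref{eq: K_S_in_function_except_components} at $\lambda_{\overline{x}}=\mu_{\overline{y}}=1$. There $K_S$ is an effective combination with all $\xi_{i,p}\ge 0$. I would expand $K_S Y_1^{\overline{x}}=\delta_{\overline{x}}(Y_1^{\overline{x}})^2+\delta_{\overline{y}}Y_1^{\overline{x}}Y_2^{\overline{y}}+\sum \xi_{\ell_q,q}$ and use Theorem \ref{thm: dec_Fibres_self_int_and_genus} to show it is $\ge 0$, hence $\neq -1$. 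For fibres over points carrying only canonical singularities, $K_S$ restricted near them agrees with the pullback of the nef, ample-type class $\Theta_1F_1+\Theta_2F_2$. Since canonical singularities have zero discrepancy, $K_SY\ge 0$ there, which rules those components out.
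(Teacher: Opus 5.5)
Your localization step matches the paper. With a single non-canonical point $p$, Proposition \ref{prop: loc_-1_curves_from_signatures}, via the proof of Theorem \ref{thm: min_Function_localization_-1_curves} at $\lambda_{\overline{x}}=\mu_{\overline{y}}=1$, puts every $(-1)$-curve in the support of $A$. So only $Y_1^{\overline{x}}$ and $Y_2^{\overline{y}}$ are candidates, and your discussion of fibres with only canonical singularities is not needed.

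The gap is the step you flag as the main obstacle. You never show that $Y_1^{\overline{x}}$ and $Y_2^{\overline{y}}$ are not $(-1)$-curves, and the tool you propose cannot do it.
\begin{itemize}
\item The inequality $\widehat{\Theta_1}+\widehat{\Theta_2}\ge 1$ only serves to make $B=0$. When you expand $AY_1^{\overline{x}}$, the terms are $\xi_{1,q}$, not $\xi_{\ell_q,q}$, since $Y_1^{\overline{x}}$ meets $E_1^q$. Their $\delta_{\overline{x}}$-parts cancel $\delta_{\overline{x}}(Y_1^{\overline{x}})^2$ exactly, because $(Y_1^{\overline{x}})^2=-\sum_q a_q/n_q$. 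So that inequality plays no role.
\item More importantly, your plan never uses the hypothesis that $X$ has only one non-canonical point, and this step requires it. Family no.\,537 shows this. There all non-canonical points ($\frac{1}{8},\frac{1}{8},\frac{1}{4}$) lie over one point of each base and $t_2$ has length $6$, so the same localization and the same inequality hold. Yet $Y_2$ is a $(-1)$-curve.
\end{itemize}

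You also dismissed Proposition \ref{prop: diseg_intersezione} too early: the point is not that $Y$ meets one $(-2)$-curve, but that it would have to meet two.
\begin{itemize}
\item If a central component $Y$ is rational, then $Y\cong C_j/H$. By Theorem \ref{thm:Serrano}, the cover $C_j\to Y\cong\mathbb P^1$ is branched exactly where strings meet $Y$.
\item Since $g(C_j)\ge 2$, Riemann--Hurwitz gives at least three strings. At most one of them comes from $p$.
\item Hence $Y$ meets two disjoint $(-2)$-curves, the ends of canonical strings. Contracting $Y$ would produce two intersecting $(-1)$-curves on a surface of general type, contradicting Proposition \ref{prop: diseg_intersezione}.
\end{itemize}
This is the paper's argument. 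Numerically, the same count gives $-Y^2=\sum a_q/n_q>\frac{1}{2}+\frac{1}{2}$, so $Y^2\le -2$.

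Alternatively, your own discrepancy argument for canonical-only fibres extends directly. In \eqref{eq: K_S_for_prod_quo} we have $F_1Y_1^{\overline{x}}=0$ and $F_2Y_1^{\overline{x}}=|G|/e_{\overline{x}}$. The only exceptional curve with nonzero coefficient meeting $Y_1^{\overline{x}}$ is $E_1^p$. Hence $K_SY_1^{\overline{x}}=\Theta_2|G|/e_{\overline{x}}-\bigl(1-\frac{a_p+1}{n_p}\bigr)>-1$, so $K_SY_1^{\overline{x}}\ge 0$ by integrality. The same holds symmetrically for $Y_2^{\overline{y}}$.

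Either completion uses the single-non-canonical-point hypothesis at exactly this step, which is what your proposal is missing.
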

\begin{proof}
	If $X$ has at most canonical singularities, then $S$ is minimal, see \cite[Prop. 4.7(2)]{BP12}. Thus, let us assume that $\frac{1}{n}(1,a)$ is non-canonical. Then Proposition \ref{prop: loc_-1_curves_from_signatures} applies and a $(-1)$-curve $E$ would be one of the central components of $S$. However, $E$ could not intersect more than one $(-2)$-curve of the resolution since two $(-1)$-curves can not intersects on a surface of general type. By Theorem \ref{thm:Serrano}(i) and (iii), then either $C_1$ or $C_2$ would be a Galois cover of $E\cong \mathbb P^1$ branched over two points; a contradiction since these  curves have both a genus greater or equal than $2$. 
\end{proof}
\section{The minimality of the remain list of product-quotient surfaces with $p_g=3$, $q=0$}\label{sec: minimality_PQ_pg3_q0}
In this section, we present an application of Theorem \ref{thm: min_Function_localization_-1_curves}. More precisely, we discuss the product-quotient surfaces with \(p_g=3\) and \(q=0\) listed in \cite[Table 21]{fedeLincei}, whose minimality had not yet been established. As a consequence of this work, we can improve \cite[Thm. 0.3]{fedeLincei} as follows: 
	\begin{theorem}\label{thm: improvement_classif_pg3_q0}
		Let $S$ be a regular product-quotient surface with $23\leq K_S^2\leq 32$ and $\chi(\mathcal O_S)=4$. Then $S$ is a surface of general type and it realizes one of the families of surfaces described in \cite[Tables 9--21]{fedeLincei}. Furthermore, these surfaces, with the exception of those in the families $no.  \ 537$, are minimal. 
	\end{theorem}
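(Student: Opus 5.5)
The first part of the statement, that $S$ is of general type and belongs to one of the families in \cite[Tables 9--21]{fedeLincei}, is the content of \cite[Thm. 0.3]{fedeLincei}, so I would take it as given. That result already settles minimality for every family except those in \cite[Table 21]{fedeLincei} whose minimality was left open. The work therefore reduces to a finite, case-by-case check of the remaining families in Table 21. I would organize this check through Theorem \ref{thm: min_Function_localization_-1_curves}, with a separate treatment of family $no.537$.

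For each remaining family I would proceed in three steps.
\begin{enumerate}
\item From the generating vectors, extract the combinatorial structure of $S$: the signatures $t_1,t_2$, the basket of singularities of $X$, and how that basket is distributed among the fibres over the points of $I_1$ and $I_2$.
\item Build the piecewise linear function $F$ on $\Delta_1\times\Delta_2$ and minimize it. Since $F$ is convex and piecewise linear (a sum of maxima of affine functions), this is a linear program, which the \texttt{MAGMA} script solves. Whenever Proposition \ref{prop: loc_-1_curves_from_signatures} or Lemma \ref{lem: technical_lemma_-1_curves_from_signatures} applies, I would cite it instead, since it gives $F(1,1)=0$ directly.
\item If $\min F<1$, Theorem \ref{thm: min_Function_localization_-1_curves} shows that every curve contracted to $S_{\min}$ lies in a reducible fibre of $f_1$ or $f_2$. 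Exceptional components of the resolution have self-intersection $\le -2$ and cannot be contracted first. So a $(-1)$-curve would have to be a central component $Y_j^{\bar z}$. Using Theorem \ref{thm: dec_Fibres_self_int_and_genus}, I would compute the genus and self-intersection of every central component, and check that none is a smooth rational curve with self-intersection $-1$. As in Example \ref{exmp: example}, positive genus alone is usually enough.
\end{enumerate}

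To justify that no contraction occurs at all, as opposed to merely excluding $(-1)$-curves on $S$ itself, I would argue as follows. Any contraction must begin by contracting some $(-1)$-curve of $S$. If $S$ has none, then $S$ is minimal.

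For the families where some central component is rational, I would use Proposition \ref{prop: diseg_intersezione}. A rational central component $Y$ of self-intersection $-1$ meets the first curves of the Hirzebruch--Jung strings. If any of these is a $(-2)$-curve, or if $Y$ meets curves with too large an intersection number, this contradicts the inequality, except in the genuinely non-minimal family $no.537$. In addition, Theorem \ref{thm:Serrano} forces $Y\cong C_j/H$ to carry at least three branch points, and I would use this exactly as in the proof of Corollary \ref{cor: improvement_Lemma_Roberto}.

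I expect the main obstacle to be the families where $\min F\ge 1$, or where a rational central component with $Y^2=-1$ genuinely appears, as it does in $no.537$, which is excluded. For the former I would first try a different, finer choice of decomposition $K_S\equiv A-B$, for instance moving some weight onto central components. Failing that, I would analyse the configuration of negative curves by hand, using Proposition \ref{prop: diseg_intersezione} and Bombieri's bounds. This is the step I am least sure can be handled uniformly.
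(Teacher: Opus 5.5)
Your plan matches the paper's proof for all families but one. The paper deduces the theorem from \cite[Thm. 0.3]{fedeLincei} and Proposition \ref{prop: MainThm_Min_pg3_q0}. That proposition applies Theorem \ref{thm: min_Function_localization_-1_curves} case by case (Table \ref{tab: minimality_results}) and checks that no central component is a smooth rational curve with self-intersection $-1$ outside $no.537$. Families $no.538$ and $no.553$ have no known generating vectors, so your step (1) cannot be run literally there; the paper handles them with Corollary \ref{cor: improvement_Lemma_Roberto}, and your fallback to Proposition \ref{prop: loc_-1_curves_from_signatures} essentially covers this.

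The gap is the step you flag yourself, and it really occurs. For $no.544$ all singularities lie over one point of each base curve, so $\Delta_1\times\Delta_2$ is a single point and $F$ is the constant $\frac98>1$. Your first remedy, re-weighting onto central components, is therefore vacuous within Proposition \ref{prop: K_S_as_A-B}. Your second remedy, ``analysing by hand'', is not yet an argument. As written, the proposal does not prove minimality of $no.544$.

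The missing idea is to use integrality of intersection numbers instead of the worst-case bound $\tau(B)$. Here $K_S\equiv A-B$ with $B=\frac38E_1$, where $E_1$ is the $(-6)$-curve. $A$ is a positive combination of the two central components (both with self-intersection $-2$) and the remaining exceptional $(-2)$-curves, and its smallest coefficient is $\frac12$. For a hypothetical $(-1)$-curve $E$, Proposition \ref{prop: diseg_intersezione} gives $E_1E\le 3$.
\begin{itemize}
\item If $E_1E\le 2$, then $AE\le -1+\frac34<0$, so $E$ lies in the support of $A$. That is impossible, since the support of $A$ contains no $(-1)$-curve.
\item If $E_1E=3$, then $AE=\frac18>0$. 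Since $E$ is not a component of $A$, it meets some component of $A$ with positive integer intersection, so $AE\ge\frac12$, a contradiction.
\end{itemize}
In other words, the criterion $\tau(B)<1$ fails only at the extremal value of $E_1E$. That value is then ruled out by comparing the small positive number $AE$ with the smallest coefficient of $A$.

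A minor point: you claim that a rational $(-1)$ central component meeting a $(-2)$-curve contradicts Proposition \ref{prop: diseg_intersezione}. This is false. A single transversal intersection with a $(-2)$-curve is allowed, and it is exactly what happens in $no.537$. That paragraph is unnecessary anyway, since outside $no.537$ every rational central component has self-intersection $-2$.
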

	\begin{proof}
		The proof follows from \cite[Thm. 0.3]{fedeLincei} and Proposition \ref{prop: MainThm_Min_pg3_q0} below. 
	\end{proof} 
The subsequent Table \ref{tab: minimality_results} consists of three columns: the first provides the identification number of each surface in the list \cite[Table 21]{fedeLincei}; the second contains the pair \((g(Y), Y^2)\) for each central component \(Y\) of \(S\), and the third reports the minimum of the function \(F\) in $\Delta_1\times \Delta_2$. 
\begin{table}[H]
	\centering
	 \begin{minipage}[t]{0.48\textwidth}
	 	\centering
	\begin{tabular}{|c|c|c|}
		\hline
		$no.$ & $(g(Y), Y^2)$ & $\min_{\Delta_1\times \Delta_2} F$ \\ \hline\hline
		523 & $(1,-1), (10,-1)$& 0\\ \hline
		524 & $(1,-1)$, $(10,-1)$& 0\\ \hline
		525 & $(46,-1)$, $(2,-1)$& $7/8$\\ \hline
		526 & $(3,-1)^2$, $(5,-1)^2$& $0$ \\ \hline
		527 & $(14,-2)$, $(0,-2)$ &$0$ \\ \hline
		528 & $(14,-2)$, $(0,-2)$ &$0$ \\ \hline
		529 & \makecell{$(28,-1)$, $(9,-1)$, \\ $(2,-2)$} & $9/20$ \\ \hline
		530 & $(14,-2)$, $(2,-2)$&$ 3/4$ \\ \hline
		531 & $(14,-2)$, $(2,-2)$&$ 3/4$ \\ \hline
		532 &\makecell{$(4,-1)$, $(3,-1)$, \\$(1,-2)$ }&0 \\ \hline
		533 & $(0,-2)$, $(6,-2)$& 0\\ \hline
		534 & $(1,-1)^2$, $(13,-2)$& 0\\ \hline
		535 & \makecell{$(14,-1)$, $(7,-1)$, \\ $(2,-1)$, $(1,-1)$}& 0\\ \hline
		536 &$(20,-2)$, $(0,-2)$ &0 \\ \hline
		537 & $(25,-1)$, $(0,-1)$& 0\\ \hline
		538 &$(13,-2)$,$(0,-2)$ & 0\\ \hline
		539 & $(3,-2)$, $(1,-2)$& 0\\ \hline
	\end{tabular}
\end{minipage}
\hfill 
\begin{minipage}[t]{0.48\textwidth}
	\centering
	\centering
	\begin{tabular}{|c|c|c|}
		\hline
		$no.$ & $(g(Y), Y^2)$ & $\min_{\Delta_1\times \Delta_2} F$ \\ \hline\hline
		540 & $(3,-2)$, $(1,-2)$& 0\\ \hline
		541 &$(13,-2)$,$(1,-2)$ & 0\\ \hline
		542 &$(5,-2)$, $(3,-2)$ & 3/8\\ \hline
		543 & \makecell{$(18,-1)$, $(6,-1)$, \\ $(5,-2)$}& 45/56 \\ \hline
		544 &$(13,-2)$, $(5,-2)$ & 9/8\\ \hline
		545 & $(19,-2)$, $(0,-2)$& 0\\ \hline
		546 & $(15,-1)$, $(2,-1)$& 0\\ \hline
		547 & $(9,-2)$, $(1,-1)^2$&0 \\ \hline
		548 & $(5,-1)^2$, $(1,-2)$ &0\\ \hline
		549 & $(13,-1)^2$, $(0,-2)$&0 \\ \hline
		550 & $(9,-2)$, $(1,-2)$& 0\\ \hline
		551 &$(25,-2)$, $(1,-1)^2$ &0  \\ \hline
		552 & $(9,-2)$, $(1,-1)^2$& 0\\ \hline
		553 & \makecell{$(62,-2)$, $(25,-1)$,\\  $(2,-2)$, $(1,-1)$}&0  \\ \hline
		554 & \makecell{$(62,-2)$, $(25,-1)$, \\ $(2,-2)$, $(1,-1)$}& 0\\ \hline
		555 & \makecell{$(62,-2)$, $(25,-1)$, \\ $(4,-2)$, $(1,-1)$}&0 \\ \hline
	\end{tabular}
\end{minipage}
\caption{Genus and self-intersection of the central components, and the minimum of the function \(F\) for each family of product-quotient surfaces in \cite[Table 21]{fedeLincei}.}
\label{tab: minimality_results}
\end{table}
\vspace{-0.5cm}

\begin{remark}
	All information in Table \ref{tab: minimality_results}, with the exception of cases $no.538$ and $no.553$, has been obtained by extracting the combinatorial structure (see the Introduction and Remark \ref{rem: F_only_dep_from_num_data}) from the pair of spherical systems of generators defining the corresponding family. More precisely, the data for each case in Table \ref{tab: minimality_results} and the \texttt{MAGMA} script used to compute their function \(F\), its minimum, and the genus and self-intersection of the central components are available at the following webpage:
	\begin{center}
		\href{https://github.com/Fefe9696/MAGMA_script_Minimality_Criterion_for_PQ_surfaces_of_general_type}{\texttt{MAGMA} script and results repository}
	\end{center}
However, due to computational restrictions, neither the number of families 
nor the defining spherical systems of generators are currently known for 
cases $no.538$ and $no.553$ in \cite[Table 21]{fedeLincei}. 
\begin{table}[H]
	\centering
	\hspace*{-1.8cm}
	\renewcommand{\arraystretch}{1.2}
	\begin{tabular}{| c| c| c|c| c| c|  c| }
		\hline
		$no.$ & $G(d,n)$ & Sing($X$) &	 $t_1$ & $t_2$  & $\Theta_1$ & $\Theta_2$ \\
		\hline
		\hline
		$ 538$ &$\langle 48,48\rangle$& $ 1/6, 1/2^2, 5/6$& $ 2, 4, 6$& $ 2^9, 6 $& $\frac{1}{12}$ &$\frac{10}{3}$\\
		$ 553$ &$\langle 160,234\rangle$& $ 1/5, 1/2^4, 4/5$& $ 2, 4, 5$& $ 2^4, 4, 5 $& $\frac{1}{20}$ &$\frac{31}{20}$\\
		\hline
	\end{tabular}
\end{table}

In any case, with some further analysis, one can deduce the combinatorial structure of the surface, namely that the entire basket of 
singularities must be contained in the points of ramification index $6$ 
for $no.538$, while for $no.553$ we have that $\frac{1}{5}$ and $\frac{4}{5}$ are contained in the points of ramification index $5$, and the remaining four nodes are contained in two points of ramification index $2$. 
\\
While this is already sufficient to compute the function $F$ as well as 
the genus and self-intersection of the central components, Corollary 
\ref{cor: improvement_Lemma_Roberto} allows us to automatically deduce 
that they yield minimal surfaces of general type.
\end{remark}
\begin{proposition}\label{prop: MainThm_Min_pg3_q0}
	Let $S$ be a regular product-quotient surface of general type in the list \cite[Table 21]{fedeLincei} with the exception of  $no.  \ 537$. Then $S$ is minimal. 
\end{proposition}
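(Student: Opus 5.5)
The plan is a case-by-case analysis driven by Table \ref{tab: minimality_results}. For every family except $no.\,537$, $no.\,538$, $no.\,544$ and $no.\,553$, the third column records $\min_{\Delta_1\times\Delta_2}F<1$. Theorem \ref{thm: min_Function_localization_-1_curves} then applies: every curve contracted by $S\to S_{\min}$ lies in the support of $A$ from Proposition \ref{prop: K_S_as_A-B}. That support consists of central components $Y$ together with exceptional curves $E_i^p$ of the Hirzebruch--Jung strings. The $E_i^p$ have self-intersection $\le -2$ on the minimal resolution, so none of them is a $(-1)$-curve on $S$. Hence it suffices to show that no central component is a $(-1)$-curve, that is, no $Y$ with $g(Y)=0$ and $Y^2=-1$. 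Once $S$ has no $(-1)$-curve it is minimal, so there is no need to follow the iterative contraction further.

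This check is read directly from the second column of the table, which was computed via Theorem \ref{thm: dec_Fibres_self_int_and_genus}. In almost all the listed cases every central component with $Y^2=-1$ has positive genus, so it is not rational and cannot be exceptional. The central components that are rational ($no.\,527$, $528$, $533$, $536$, $545$, $549$, \dots) all have $Y^2=-2$, so they are not $(-1)$-curves either. The only family with a rational central component of self-intersection $-1$ is $no.\,537$, which is exactly the excluded one. I would state this explicitly, so it is clear why the hypotheses are sharp.

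The remaining cases need separate treatment. For $no.\,538$ and $no.\,553$ I would use the combinatorial structure described in the remark preceding the statement. For $no.\,538$, all singularities lie over a single point of each base, and $t_1=(2,4,6)$, $t_2=(2^9,6)$. Then $t_2$ has length at least $5$, so Proposition \ref{prop: loc_-1_curves_from_signatures}(1) applies, and the genus/self-intersection data $(13,-2)$, $(0,-2)$ finish the argument as above. For $no.\,553$, the non-canonical singularities $\tfrac15,\tfrac45$ lie over single points and $t_2$ has length $6$, so again Proposition \ref{prop: loc_-1_curves_from_signatures}(1) applies, and the central components listed include no rational $(-1)$-curve.

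The main obstacle is $no.\,544$, where $\min F=9/8\ge 1$, so the criterion does not apply directly. I would argue as follows. The central components there are $(13,-2)$ and $(5,-2)$, so any $(-1)$-curve $E$ is not contained in $\operatorname{supp}A$. From $AE=BE-1\ge 0$ we get $BE\ge 1$, and Proposition \ref{prop: diseg_intersezione} bounds $BE$ by $F$ evaluated at the chosen point. I would first try to sharpen the estimate. Since $E$ is not in a fibre, $EF_j>0$, and intersecting $E$ with both fibrations should give $K_SE\ge 0$. Concretely, write $K_S\equiv\Theta_1F_1+\Theta_2F_2-\sum(\text{discrepancy terms})$ and use that $EF_1,EF_2$ are positive integers, controlled by the multiplicities of the central fibres, in order to reach $K_SE>-1$. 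If this fails, a fallback is to refine the choice of $B$ by also moving the fibres of $Y$ into $A$.
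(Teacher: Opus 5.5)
\textbf{Gap: case $no.\,544$ is not actually proved.} Your treatment of the families with $\min F<1$ is the paper's argument. Every $(-1)$-curve lies in $\mathrm{supp}\,A$, the $E_i^p$ have self-intersection $\le -2$, and the only rational central component with $Y^2=-1$ is in $no.\,537$. Your handling of $no.\,538$ and $no.\,553$ through Proposition \ref{prop: loc_-1_curves_from_signatures}(1) is a harmless variant of the paper's appeal to Corollary \ref{cor: improvement_Lemma_Roberto}. (A small slip: $4/5$ is an $A_4$ point, hence canonical.)

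The problem is $no.\,544$, the one case needing an extra idea, where you give no argument. The inequality route you sketch cannot succeed. Put $a=F_1E$, $b=F_2E$, $k=E_1E$. Then $K_S\equiv\frac1{12}F_1+\frac5{24}F_2-\frac23E_1$, and $a,b\ge k$ because the $(-6)$-curve has multiplicity $1$ in both special fibres. The best bound is therefore $K_SE\ge-\frac38k\ge-\frac98$, which is exactly the bound $BE\le\tau(B)=\frac98$ you already have. Your claim that $EF_j>0$ forces $K_SE\ge0$ is false in general: the $\PSL(2,7)$ and $\GL(2,\mathbb Z_4)$ cases of Section \ref{sec: minimality_PQ_pg0_q0} have $(-1)$-curves outside the fibrations. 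Your fallback of re-choosing $B$ cannot help either. Here $I_1$ and $I_2$ are singletons, so $\Delta_1\times\Delta_2$ is a point and $F\equiv\frac98$; putting weight on other fibres only makes the coefficient $\xi$ of $E_1$ more negative.

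\textbf{What is missing is an integrality argument}, since the numerical bound is sharp. The paper takes $B=\frac38E_1$.
\begin{itemize}
\item If $E_1E\le2$, then $AE\le-\frac14<0$, so $E\subset\mathrm{supp}\,A$. This is impossible because $Y_1^2=Y_2^2=-2$.
\item Hence $E_1E=3$ and $AE=\frac18>0$. So $E$ meets some component of $A$ with intersection number $\ge1$. Every coefficient of $A$ is $\ge\frac12$, which forces $AE\ge\frac12$, a contradiction.
\end{itemize}

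Alternatively, your own idea closes once you make it Diophantine. The equation $K_SE=-1$ reads $2a+5b=16k-24$. Combined with $a,b\ge k$ and $k\le3$ (Proposition \ref{prop: diseg_intersezione}), this gives $9k\ge24$, so $k=3$ and $2a+5b=24$. Then $b$ must be even with $b\ge3$, so $b=4$ and $a=2<k$, a contradiction.
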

\begin{proof}
The result follows directly from Theorem \ref{thm: min_Function_localization_-1_curves}, as shown in Table \ref{tab: minimality_results}, with the only exception of case $no.544$. In this instance, the minimum of the function \(F\) is \(\frac{9}{8}\), which is greater than one; thus, Theorem \ref{thm: min_Function_localization_-1_curves} does not hold. Let us consider this case in further detail:
	\begin{table}[H]
		\centering
		\hspace*{-1.8cm}
		\renewcommand{\arraystretch}{1.2}
		\begin{tabular}{| c| c| c|c| c| c|  c| }
			\hline
			$no.$ & $G(d,n)$ & Sing($X$) &	 $t_1$ & $t_2$  & $\Theta_1$ & $\Theta_2$ \\
			\hline
			\hline
			$ 544$ &$\langle 768, 1086051\rangle$& $ 1/6, 1/2^2, 5/6$& $ 2, 4, 6$& $ 2, 6, 8 $& $\frac{1}{12}$ &$\frac{5}{24}$\\
			\hline
		\end{tabular}
	\end{table}

One can check that there are only two central components, \(Y_{1}\) and \(Y_{2}\), over the points of \(C_1/G\) and \(C_2/G\) with ramification index 6. We index the exceptional divisors \(E_{i}\) of the resolution of \(X\) according to the order of the singularities in the basket (for instance, \(E_{1}\) is the \((-6)\)-rational curve, and so on). From Proposition \ref{prop: K_S_as_A-B}, we have \(K_S \equiv A-B\), where:
\[
\begin{split}
	A & :=\frac{1}{2}Y_1+\frac{5}{4}Y_2+\frac{7}{8}E_2+\frac{7}{8}E_3+\frac{5}{8}E_4+\frac{3}{4}E_5
	+\frac{7}{8}E_6+E_7+\frac{9}{8}E_8\\ 
	B& :=\frac{3}{8}E_1.
\end{split}\]
Let us assume there exists a \((-1)\)-curve \(E\) on \(S\). Applying Proposition \ref{prop: diseg_intersezione} to \(E_{1}\), we obtain \(E_1 E \leq \max\{1, 6-3\}=3\). Let us suppose that \(E_1  E \leq 2\); then we would have:
\[
A E=(K_S+B) E= -1+\frac{3}{8}E_1E\leq -1+ \frac{3}{4}=-\frac{1}{4}<0.
\]
This implies that \(E\) would be in the support of \(A\), a contradiction since both central components have self-intersection \(-2\).
This forces \(E_1E = 3\), so $AE=\frac{1}{8}$. 
 Since \(E\) is not a component of \(A\), its intersection with each component of the support of \(A\) must be non-negative. Furthermore, \(E\) must intersect at least one of these components positively, otherwise \(AE\) would be zero. This leads to:
\[
\frac{1}{8}=A E\geq \min\left\{\frac{1}{2}, \frac{5}{4},\frac{7}{8}, \frac{5}{8}, \frac{3}{4},1, \frac{9}{8}\right\}=\frac{1}{2},
\]
a contradiction. Thus, \(E\) can not exist and \(S\) is a minimal surface.
\end{proof}
\begin{theorem}\label{thm: MainThm_note_2}
	Surfaces belonging to one of the families  $no.537$ are not minimal. They have a $(-1)$-curve contained in a fibre of $f_2\colon S\to C_2/G$, which intersects transversally in one point the exceptional $(-2)$-curve of the resolution of the node of the quotient model $X=\left(C_1\times C_2\right)/G$. After contracting both of these curves we obtain a minimal surface of general type with $K^2=26$. 
\end{theorem}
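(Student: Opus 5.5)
We will locate the $(-1)$-curve with Theorem \ref{thm: min_Function_localization_-1_curves}, check that contracting it and the node curve lowers no further, and then prove minimality of the result with Theorem \ref{thm: minimality_criterion}.

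\textbf{Locating the $(-1)$-curve.} Table \ref{tab: minimality_results} gives $\min_{\Delta_1\times\Delta_2}F=0$ for $no.537$. So every curve contracted to $S_{\min}$ lies in the support of $A$, which consists of central components and exceptional curves. Exceptional curves of the minimal resolution have self-intersection $\le -2$. The central components are one of type $(25,-1)$ and one of type $(0,-1)$, so the only possible $(-1)$-curve is the rational central component $Y$ with $Y^2=-1$. We will take it to be $Y=Y_2^{\overline y}$ for some $\overline y\in C_2/G$, so that it lies in a fibre of $f_2$.

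By Theorem \ref{thm:Serrano}, $Y\cong C_1/H$ with $H$ the stabilizer. Since $g(C_1)\ge 2$, the cover $C_1\to Y\cong\mathbb P^1$ must have at least three branch points. The Hirzebruch--Jung strings meeting $Y$ correspond exactly to these branch points, so $Y$ meets several strings. From the combinatorial structure of $no.537$ (basket and distribution among fibres, read off from the generating vectors), I expect that one of these strings is the single $(-2)$-curve $N$ resolving the node, and that $Y$ meets it transversally in one point by Theorem \ref{thm:Serrano}(ii).

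\textbf{Contracting $Y$ and then $N$.} Contract $Y$ to get $S'$. The image $N'$ has $N'^2=-2+1=-1$ and is still smooth rational, so it is a $(-1)$-curve on $S'$; contract it to get $S''$. Then
\[
K_{S''}^2=K_S^2+2=26,
\]
which requires checking that $K_S^2=24$ for $no.537$ in \cite{fedeLincei}. The other strings that met $Y$, or met $N$, acquire self-intersections raised by one or two.

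\textbf{Minimality of $S''$.} This is the main obstacle. I would not rerun the whole criterion. Instead I would push $A$ and $B$ forward to $S''$: the relation $K_{S''}\equiv A''-B''$ holds because $A$ contains $Y$ and $N$ with coefficients that make the pushforward consistent. Using $\tau(B'')\le\tau(B)<1$ (as in the proof of Theorem \ref{thm: minimality_criterion}), every $(-1)$-curve of $S''$ lies in the support of $A''$.

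So it suffices to compute the self-intersections and genera of the remaining components of $A''$:
\begin{itemize}
\item the images of the other strings, whose ends gained $+1$ or $+2$;
\item the other central components, including the genus-$25$ one, whose self-intersection increases by the square of its intersection number with $Y$ and $N$ (computed via Theorem \ref{thm: dec_Fibres_self_int_and_genus}).
\end{itemize}
None of these should be a smooth rational $(-1)$-curve. Where a string end becomes a $(-1)$-curve, or a rational curve becomes too positive, use Proposition \ref{prop: diseg_intersezione} on $S'$ or $S''$ for a contradiction. Alternatively, invoke \cite[Prop. 2.3]{BPHW2004}: a smooth rational curve on a surface of general type with nonnegative self-intersection contradicts general type. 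If some string component does end up as a $(-1)$-curve, the claim $K^2_{\min}=26$ would fail, so the explicit check of the basket for $no.537$ is the crux.
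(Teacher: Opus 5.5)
Your strategy works, and its final step takes a different route from the paper's. Note that the paper's $S'$ is your $S''$. The paper proves minimality by showing $K_{S'}$ is nef. It takes the effective representative \eqref{eq: can_div_of_contact_Y2_and_E4}, computes the pullbacks of $b(Y_1),b(E_1),b(E_2),b(E_3)$ and their full intersection matrix, and checks that $K_{S'}$ is nonnegative on each of its components. You instead re-apply Theorem~\ref{thm: minimality_criterion} on $S''$, and this closes cleanly. All singular points lie over the index-$8$ points of both bases, so $\Delta_1,\Delta_2$ are points. Then $B=0$ and $A=\frac13Y_1+17Y_2+\frac{17}{12}(E_1+E_2)+\frac{23}{6}E_3+\frac{26}{3}E_4$. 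Its pushforward is exactly the paper's representative of $K_{S'}$, so every $(-1)$-curve of $S''$ is among $b(Y_1),b(E_1),b(E_2),b(E_3)$. You then only need that $b(Y_1)$ has geometric genus $25$ and that $b(E_1)^2=b(E_2)^2=-6$, $b(E_3)^2=-2$. That is less work than the full matrix. In exchange, the paper's computation gives nefness directly, and from the same numbers $K_{S'}^2=\frac13\cdot 44+\frac{17}{12}\cdot 8=26$.

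However, as written your proposal does not prove the data-dependent facts it rests on:
\begin{itemize}
\item that the rational central component lies in a fibre of $f_2$ (you simply take it so);
\item that the node curve meets it (``I expect'');
\item that $K_S^2=24$;
\item that no string image becomes a $(-1)$-curve (``should be'').
\end{itemize}
All of these come from the data of $no.537$: basket $\frac18(1,1)^2,\frac14(1,1),\frac12(1,1)$, with $t_1=(2,3,8)$ and $t_2=(2^2,4^3,8)$. The $\frac18$ points must lie over the index-$8$ points. Integrality of $(Y^{\overline z})^2$ in Theorem~\ref{thm: dec_Fibres_self_int_and_genus} then forces the $\frac14$ point and the node there too. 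The genus formulas give $g(C_1)=5$, $g(C_2)=205$, $g(Y_2)=0$, $g(Y_1)=25$, and $Y_1Y_2=\frac{192}{64}-1=2$. The curves $E_1,E_2,E_3$ are a $(-8)$-, a $(-8)$- and a $(-4)$-curve, each meeting $Y_2$ once. After the two contractions their self-intersections are therefore $-6,-6,-2$. Finally, $K_S^2=K_X^2-2\cdot\frac{36}{8}-1=34-10=24$. Without this input your argument is only an outline, though you correctly flag it as the crux.

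Three smaller corrections:
\begin{itemize}
\item $K_{S''}\equiv b_*A-b_*B$ holds simply because $b_*$ preserves numerical equivalence and $b_*K_S=K_{S''}$. No condition on the coefficients of $Y$ and $N$ is needed.
\item Your rule for the new self-intersection of $Y_1$ is wrong. After contracting $Y_2$, the image of $Y_1$ meets the image of $N$ with multiplicity $Y_1N+(Y_1Y_2)(Y_2N)=3$. Hence $b(Y_1)^2=-1+4+9=12$, and $b(Y_1)$ is singular with $p_a=29$. This is harmless, since geometric genus $25$ already excludes it.
\item A string end becoming a $(-1)$-curve would give no contradiction via Proposition~\ref{prop: diseg_intersezione}. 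It would only force further contractions.
\end{itemize}
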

\begin{proof}
	Surfaces belonging to the families $no.537$ share the following information: 
		\begin{table*}[h]
		\centering
		\hspace*{-1.8cm}
		\renewcommand{\arraystretch}{1.2}
		\begin{tabular}{| c| c| c|c| c| c|  c| }
			\hline
			$no.$ & $G(d,n)$ & Sing($X$) &	 $t_1$ & $t_2$  & $\Theta_1$ & $\Theta_2$ \\
			\hline
			\hline
			$ 537$ &$\langle 192, 181\rangle$& $ 1/8^2, 1/4, 1/2$& $ 2, 3, 8$& $ 2^2, 4^3, 8 $& $\frac{1}{24}$ &$\frac{17}{8}$\\
			\hline
		\end{tabular}
	\end{table*}

	One can easily verify that the entire basket of singularities is contained in the points of ramification index $8$. Moreover, Proposition \ref{prop: loc_-1_curves_from_signatures} holds and so only the two central components $Y_1$ and $Y_2$ may be $(-1)$-curves of $S$. Indeed, we notice from Table  \ref{tab: minimality_results} that $Y_2$ is a $(-1)$-curve. 
From Theorem \ref{thm:Serrano}$(ii)$, $Y_2$ is intersecting transversally in one point  the $(-2)$-curve $E_4$ of the resolution. We contract the curve $Y_2$, so the strict transform of the curve $E_4$ becomes a $(-1)$-curve. We contract also the curve $E_4$ and we obtain that the numerical canonical class of the blow-down $b\colon S\to S'$ is then 
	\begin{equation}\label{eq: can_div_of_contact_Y2_and_E4}
		K_{S'}\equiv\frac{1}{3}b(Y_1)+\frac{17}{12}(b(E_1)+b(E_2))+\frac{23}{6}b(E_3).
	\end{equation}
	In this case, we have that 
	\[
	\begin{split}
		b^*(b(Y_1))=Y_1+5Y_2+3E_4, &\qquad b^*(b(E_1))=E_1+2Y_2+E_4\\
		b^*(b(E_2))=E_2+2Y_2+E_4, &\qquad b^*(b(E_3))=E_3+2Y_2+E_4,
	\end{split}
	\]
	so the intersection pair matrix among $b(Y_1), b(E_1), b(E_2)$ and $b(E_3)$ is equal to 
	\begin{table}[H]
		\centering
		\begin{tabular}{|c|c|c|c|c|}
			\hline
			$\cdot$ & $b(Y_1)$ & $b(E_1)$ & $b(E_2)$ & $b(E_3)$ \\ \hline
			$b(Y_1)$  & $12$  & $6$ & $6$ & $6$ \\ \hline
			$b(E_1)$  & $6$ & $ -6$ & $2$ & $2$ \\ \hline
			$b(E_2)$ & $6$  & $ 2$ & $-6$ & $2$ \\ \hline
			$b(E_3)$ & $6$ & $2$  & $2$ & $ -2$  \\ \hline
		\end{tabular}
		\label{tab: int_prod}
	\end{table}
Thus, $K_{S'}$ is a nef divisor, as it is effective and its intersection with any of its components \eqref{eq: can_div_of_contact_Y2_and_E4} is nonnegative.
\end{proof}
\section{The minimality of product-quotient surfaces of general type with $p_g=q=0$ and $K^2\geq -1$}\label{sec: minimality_PQ_pg0_q0}
%
In this short section, we briefly discuss the minimality of product-quotient surfaces with $p_g=q=0$ and $K^2\geq -1$. These surfaces were classified in \cite[Tables 1 and 2, Sec. 5]{BP12} and \cite[Subsec. 7.1 and 7.2]{BP16}, where their minimality was already addressed using a different approach.  

In Table \ref{tab: cases_pg=q=0_Ksquare_geq_-1}, we list the self-intersection of the central components of $S$ for each case in the classification where the minimum of the function $F$ is non-zero, and the case with $G\cong \mathbb Z_5^2$. 
\\
As a result, Theorem \ref{thm: min_Function_localization_-1_curves} confirms the minimality except for the last three cases with $G\cong \PSL(2,7)$, $G\cong \GL(2,\mathbb{Z}_4)$, and $G\cong \mathbb Z_5^2$. 
\\
For the first two exceptions, Theorem \ref{thm: min_Function_localization_-1_curves} does not apply. Indeed, as proved in \cite[Sec. 5]{BP12} and \cite[Subsec. 7.1]{BP16}, the surface $S$ admits $(-1)$-curves that are not contained in the two natural fibrations of $S$. On the other hand, in the last case where $G\cong \mathbb Z_5^2$ Theorem \ref{thm: min_Function_localization_-1_curves} holds and indeed both central components of $S$ are $(-1)$-curves. This latter case is studied in detail in \cite[Subsec. 7.2]{BP16}. 
  \begin{table}[h]
  		\centering
  		\begin{tabular}{|c|c| c|c|}
  			\hline
  			$K^2_S$ & $G$ & $(g(Y), Y^2)$ & $\min_{\Delta_1\times \Delta_2} F$ \\ \hline\hline
  			4 & $\mathfrak A_6$& $(5,-1), (2,-1)$& 1/4\\ \hline
  			3 & $\mathfrak A_5$& $(2,-1)$, $(1,-1)$& 1/3\\ \hline
  			3 & $\mathbb Z_2^4\rtimes D_5$& $(5,-1)$, $(1,-1)$& $1/2$\\ \hline
  			3& $\mathfrak A_6$ & $(5,-1)$, $(2,-1)$& $5/6$ \\ \hline
  			2 & $\mathbb Z_5^2\rtimes \mathbb Z_3$ & $(2,-1)^4$&$ 2/5$ \\ \hline
  			2 & $\mathfrak A_5$ & $(2,-1)^2$, $(1,-2)$&$ 1/5$ \\ \hline
  			2& $\PSL(2,7)$ & \makecell{$(4,-1)$ , $(2,-1)$, \\ $(2,-2)$}&$13/42$ \\ \hline
  			2& $\mathfrak A_6$& \makecell{$(8,-1)$, $(4,-1)$, \\$(2,-2)$} &$11/30$ \\ \hline
  			2 & $\mathfrak S_5$ & \makecell{$(8,-1)$, $(4,-1)$, \\ $(1,-1)^2$} & $1/5$ \\ \hline
  			1&$\PSL(2,7)$ & \makecell{$(7,-2)$, $(5,-1)$, \\$(1,-1)^3$ }&1/7 \\ \hline
  			1 & $\PSL(2,7)$ & $(2,-1)$, $(1,-1)$& 23/12\\ \hline
  			0& $\GL(2,\mathbb Z_4)$ & $(2,-2)$, $(0,-2)$& 3/2\\ \hline
  			-1 & $\mathbb Z_5^2$ & $(0,-1)^2$& 0\\ \hline
  		\end{tabular}
  	 \caption{}
  \label{tab: cases_pg=q=0_Ksquare_geq_-1}
\end{table}
\section*{Acknowledgements}

The author wish to thank F. Catanese and R. Pignatelli for fruitful discussions.  The author held a research fellowship
from INdAM, Istituto Nazionale di Alta Matematica and was partially supported by the Research Project \textit{Surfaces of general type and their geometry}, CUP E53C25002010001. The author is a member of the INDAM-GNSAGA.%

\end{document}